\title{Some properties of Markov chains on the free group $\mathbb F_2$}
\author[Antoine Goldsborough]{Antoine Goldsborough}
	\address{Maxwell Institute and Department of Mathematics, Heriot-Watt University, Edinburgh, UK}
	 \email{ag2017@hw.ac.uk}
\author[Stefanie Zbinden]{Stefanie Zbinden}
	\address{Maxwell Institute and Department of Mathematics, Heriot-Watt University, Edinburgh, UK}
	 \email{sz2020@hw.ac.uk}
\newtheorem{lemma}{Lemma}[section]
\newtheorem{proposition}[lemma]{Proposition}
\newtheorem{claim}{Claim}
\newtheorem{thmintro}{Theorem}
\theoremstyle{definition}
\newtheorem{definition}[lemma]{Definition}
\newtheorem{remark}[lemma]{Remark}
\newcommand{\F}{\mathbb{F}}
\newcommand{\mc}{\mathcal}
\newcommand{\p}{\mathbb{P}}
\newcommand{\e}{\mathbb{E}}
\theoremstyle{plain}
\begin{document}
\maketitle

\begin{abstract}

Random walks cannot, in general, be pushed forward by quasi-isometries. Tame Markov chains were introduced as a `quasi-isometry invariant' are a generalization of random walks. In this paper, we construct several examples of tame Markov chains on the free group exhibiting `exotic' behaviour; one, where the drift is not well defined and one where the drift is well defined but the Central Limit Theorem does not hold. We show that this is not a failure of the notion of tame Markov chain, but rather that any quasi-isometry invariant theory that generalizes random walks will include examples without well-defined drift. 

\end{abstract}

\section{Introduction}

The drift of a random walk measures how fast, on average, the random walk is going away from the identity. 
The properties of the random walk and the group are closely related to those of the drift. This has led to significant research in trying to better understand the behaviour of the drift. 
An example of such a result is Guivarch' inequality (also called the `fundamental inequality'), linking the drift of a random walk with the asymptotic (Avez)-entropy and the growth of the group, see \cite{Guivarch, Tanaka}.

The drift of a Markov chain $(w_n^p)_n$ on a group $G$ is defined as $$\lim_{n \to  \infty} \frac{\e\left[d\left(p, w_n^p\right)\right]}{n}.$$ In the case of random walks on groups, the drift always exists by Fekete's subadditivty lemma. 

In this paper, we build examples of more general Markov chains where the drift is not well-defined. These Markov chains are actually \textit{tame} in the sense of \cite{GS21}, see Definition \ref{def:tame} below. 

Tame Markov chains were introduced in \cite{GS21} as an invariant under bijective quasi-isometries for non-amenable groups; that is the push forward of a tame Markov chain by a bijective quasi-isometry is again a tame Markov chain. We note that the `bijective' assumption is not actually a restriction as any quasi-isometry between non-amenable groups is at bounded distance from a bijective quasi-isometry (see \cite{whyte}). In the case of all (non-elementary) hyperbolic and many acylindrically hyperbolic groups acting on a hyperbolic space $X$, tame Markov chains retain properties of random walks such as linear-progress in $X$ (\cite[Theorem 1.2]{GS21}).  

It is therefore natural to ask how different these tame Markov chains can be compared to random walks. We show that they can be rather different by building examples of tame Markov chains on the most `natural' non-amenable and hyperbolic group: $\mathbb F_2$.
The following theorem highlights two properties that tame Markov chains do not inherit from random walks. 
\begin{thmintro}\label{thm:main}
For each of the following properties, there exists a tame Markov chain $(w^p_n)_n$ on $\F_2$ satisfying this property. 
\begin{enumerate}
    \item The Markov chain $(w^p_n)_n$ does not have well-defined drift.
    \item The Markov chain $(w^p_n)_n$ has well-defined drift but does not satisfy a Central Limit Theorem.
\end{enumerate}
\end{thmintro}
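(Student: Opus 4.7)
The plan is to construct two tame Markov chains on $\F_2$, each realising one of the two conclusions of Theorem \ref{thm:main}. The common idea is to take a random walk-like kernel on the Cayley tree of $\F_2$ and replace it, at each vertex $g$, by a kernel that depends only on $|g|$ plus bounded local information. Provided the step size and the uniform lower bound on forward/backward transition probabilities are preserved, the resulting chain should satisfy the bounded-range and positivity conditions required by Definition \ref{def:tame}, and hence remain tame in the sense of \cite{GS21}. The flexibility to change the transition rule based on which ``annulus'' the walker occupies is what will let us break the drift and CLT conclusions enjoyed by genuine random walks.

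For part (1), I would partition $\F_2$ into annuli $A_k = \{g : R_k \leq |g| < R_{k+1}\}$ along a sequence of radii $R_k$ growing very fast (say $R_{k+1} \geq 2^{R_k}$). On even-indexed annuli I bias the transitions so that the chain behaves like a simple random walk with outward drift $a>0$; on odd-indexed annuli I bias them so that the outward drift is $b$, with $b\neq a$ (both positive, the ``slow'' regime being obtained by inserting laziness rather than a genuine inward bias, so as to keep the chain tame and strongly transient). The expected time to cross $A_k$ is of order $(R_{k+1}-R_k)/d_k$ with $d_k\in\{a,b\}$. Because the radii grow super-exponentially, the time scale on which the walker is typically deep inside $A_k$ dominates all previous time scales, and so at the times $n_k$ when the walker is typically at distance $\asymp R_k$ one has $\e[d(p, w_{n_k}^p)]/n_k$ close to $d_k$. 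Consequently, $\e[d(p,w_n^p)]/n$ accumulates at both $a$ and $b$, so the limit does not exist.

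For part (2), the same annular skeleton is used, but the two regimes are tuned to have the \emph{same} mean drift; what differs is the tail of the one-step displacement. In the $k$-th annulus the chain has, with small probability $p_k$, a triggered sequence of consecutive outward moves of total length $\ell_k$ (implemented through a short bounded-range auxiliary memory so as not to violate the bounded step size). The parameters are tuned so that $p_k \ell_k$ contributes negligibly to $\e[d(p,w_n^p)]/n$ but $p_k \ell_k^2$ is large enough that $\mathrm{Var}(d(p,w_n^p))/n \to \infty$ along a subsequence. The resulting heavy-tailed increments prevent any Gaussian CLT in the standard $\sqrt{n}$ normalisation, while the mean drift is still well defined.

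The main obstacle will be verifying tameness in the sense of Definition \ref{def:tame} after all these position-dependent modifications, and in particular ensuring that the ``burst'' mechanism in part (2) can be realised with bounded one-step jumps and uniformly positive transition probabilities. A secondary technical point is the asymptotic computation of $\e[d(p, w_n^p)]$ and $\mathrm{Var}(d(p,w_n^p))$ along the two relevant subsequences of times; the super-exponential growth of the $R_k$ should make the contribution of past annuli negligible on each relevant scale, reducing each estimate to a clean hitting-time computation in a single annulus.
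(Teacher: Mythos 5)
Your part (1) strategy is essentially the paper's. The paper builds a length-homogeneous chain on super-exponentially growing annuli $N_s = 2^{s^2}$, alternating the backward probability between $\lambda$ and $\lambda/2$; your proposal of alternating regimes over super-exponentially growing annuli is the same idea. The only real divergence is your suggestion to implement the slow regime by adding laziness rather than changing the backward bias; the paper just changes the bias (both values of $\lambda_g$ stay below $1/4$, so transience and tameness are immediate) and avoids having to rework the non-amenability argument for a lazy chain. Either route works, so part (1) is fine.

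Part (2) takes a genuinely different route and has two real gaps. First, the proposed ``burst mechanism'' with auxiliary memory does not sit inside the class of objects the theorem is about: a tame Markov chain is a Markov chain on $\F_2$ itself (Definition \ref{def:tame}), so the transition out of $g$ can depend only on $g$, not on a side channel recording that a burst is in progress. If you try to encode the burst in the position, you effectively need regions of the tree where the forward probability is close to $1$ --- but if the backward probability is literally $0$ somewhere, the chain can no longer move inward from there and irreducibility fails, since $\F_2$ is a tree and the unique path to an ``earlier'' vertex must pass back through the bottleneck. What survives of this idea is exactly the paper's construction: thin annuli $B_s$ of sublinear width (about $N_s^{5/6}$) where the backward probability is $\lambda < 1/4$, and $1/4$ elsewhere. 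Second, and more fundamentally, the claim ``if $\mathrm{Var}(d(p,w_n^p))/n \to \infty$ along a subsequence, then the CLT fails'' is not correct as stated. Convergence in distribution does not control second moments, and one can easily have $\mathrm{Var}$ growing superlinearly (due to a rare heavy tail whose probability vanishes) while $(d(p,w_n^p) - \ell n)/\sqrt n$ still converges to a Gaussian. So even if the burst mechanism could be implemented, your proof outline would not reach the conclusion without an additional uniform integrability or direct distributional argument. The paper instead argues directly about the distribution: assuming the CLT, at time $n = N_s$ the walk lies in an $n^{2/3}$-window around $n/2$ with probability close to $1$; that window lies inside the fast annulus $B_s$; Lemma \ref{lem:in_annuli_big_linear_progress} then forces the walk to gain at least $(3/4-\lambda)n^{3/4}$ over the next $n^{3/4}$ steps, pushing it out of the CLT window at time $m=n+n^{3/4}$ and giving a contradiction. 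The crucial point is that the mean position oscillates by roughly $n^{3/4}\gg\sqrt n$, which directly contradicts the $\sqrt n$ normalisation --- no variance computation is needed or attempted.
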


We prove Theorem \ref{thm:main} by constructing examples of Markov chains with the desired properties. The underlying construction is detailed in Section \ref{sec:building:mc}. In particular, for both examples, the transition probability to go from $g$ to $gs$ is $0$ if $s$ is not one of the standard generators. This shows that already very well-behaved tame Markov chains on $\F_2$ do not retain these properties. 

One might hope that one can strengthen the notion of ``tame Markov chain'' to ensure that tame Markov chains have a well-defined drift. The following theorem shows that this is not possible; for any generalisation of the notion of random walks that is invariant under quasi-isometry, there will be examples without well-defined drift.

\begin{thmintro}\label{thm:intro2}
Let $(Z_n)_n$ be the simple random walk on $\F_2$ with respect to the standard generating set. There exists a bijective quasi-isometry $f: \F_2\to \F_2$ such that the push-forward of $(Z_n)_n$ by $f$ does not have well-defined drift. 
\end{thmintro}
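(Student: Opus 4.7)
My plan is to construct an explicit bijective quasi-isometry $f:\F_2\to\F_2$ whose action stretches distances from $e$ by a fixed factor $K>1$ on one collection of large annuli and is essentially the identity on another. Since the simple random walk is concentrated around the sphere of radius $n/2$ at time $n$, it will experience alternating displacement behavior depending on which annulus it sits in, causing the drift of $(f(Z_n))_n$ to oscillate between $1/2$ and a value close to $K/2$.

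First, I would pick a very rapidly growing sequence $n_k\to\infty$ (e.g.\ $n_{k+1}=2^{n_k}$) so that, by standard concentration (Azuma applied to $|Z_n|$, which is a lazy $\pm 1$ walk with drift $1/2$), $|Z_{n_k}|$ lies with probability $1-o(1)$ in a thin annulus $A_k$ of width $O(n_k^{3/4})$ around $n_k/2$, and moreover the $A_k$'s are very well separated. Next, I would define $f$ to be the identity outside a sparse union of thickened annuli $\mc B_k\supset A_k$, and specify $f$ on $\mc B_k$ depending on the parity of $k$: on even $\mc B_k$, $f$ is the identity; on odd $\mc B_k$, $f$ is a bijection $\mc B_k\to\mc B_k$ that takes most elements $g\in A_k$ to a tree-descendant $f(g)$ of $g$ with $|f(g)|\approx K|g|$. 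Because the outer spheres of $\mc B_k$ are exponentially larger than the inner ones, such an injection of the inner part of $\mc B_k$ into its outer part exists, and its complement in $\mc B_k$ can be matched to the remaining inner points via a swap to obtain a bijection.

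The main obstacle is verifying that $f$ is a quasi-isometry with constants independent of $k$. The dangerous pairs are $(g,h)$ with $g$ near the inner or outer boundary of $\mc B_k$ and $h$ a neighbor of $g$ outside, or else pairs lying on opposite sides of the swap: a naive stretch would move $f(g)$ a distance $\approx(K-1)|g|$ from $f(h)=h$, violating the QI bound. The fix is to thicken $\mc B_k$ further and introduce a \emph{buffer} on which the local stretch factor is dialed smoothly from $1$ up to $K$ and back down to $1$, so that $f$ is locally bi-Lipschitz with uniform constants and matches the identity outside $\mc B_k$. The exponential growth of the tree and the self-similarity of rooted ternary subtrees give enough room to realize such a graded stretching bijectively; the rapid separation of scales $n_k$ keeps different pieces from interfering, so the uniform $(K,C)$-QI constants persist globally.

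Finally, I would combine the concentration of $Z_{n_k}$ on $A_k$ with the stretch estimate on the odd annuli: for even $k$, $\e[d(f(e),f(Z_{n_k}))]/n_k\to 1/2$, whereas for odd $k$ it tends to a value close to $K/2$. Since these two limits differ, the push-forward $(f(Z_n))_n$ has no well-defined drift, proving Theorem~\ref{thm:intro2}.
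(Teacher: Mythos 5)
Your high-level strategy is the same as the paper's: build a bijective quasi-isometry that ``stretches'' on some widely separated annuli and does nothing on others, then use the concentration of $|Z_n|$ around $n/2$ to make the drift of $f(Z_n)$ oscillate along the subsequence of times $n_k$. The problem is the implementation of the stretch-and-return idea, which is where your argument has a genuine gap.

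You want $f$ to be the literal identity off a sparse union of annuli $\mc B_k$, to stretch most points of $A_k$ by a fixed factor $K$ (so $|f(g)|\approx K|g|$), and to be a bijection of $\mc B_k$ onto itself. This forces two things you never construct. First, since $f$ is a bijection of $\mc B_k$, whatever lands at radius $\approx Kn_k/2$ has to be ``paid for'': points that were there before must be sent somewhere, and, to fill the now-depleted sphere at radius $\approx n_k/2$ in the image, some points from the outer part of $\mc B_k$ must be mapped inward. Any such ``swap'' of a point $g$ at radius $\approx Kn_k/2$ down to radius $\approx n_k/2$, sitting next to an unswapped neighbour $h$, gives $d(g,h)=O(1)$ but $d(f(g),f(h))\approx (K-1)n_k/2$, destroying the QI constant. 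Second, your proposed fix — ``dialing the stretch smoothly from $1$ up to $K$ and back to $1$'' — is in tension with the fact that radial displacement accumulates: if $|f(g)|-|g|$ has grown to $\approx (K-1)n_k/2$ in the middle of $\mc B_k$, then to return to the identity at the outer boundary of $\mc B_k$ the map must compress over a range of radii, and on spheres of exponentially growing size such a compression cannot be realised ``sphere by sphere'' without further swapping. You assert that the self-similarity of the tree makes this possible, but that is precisely the non-trivial combinatorial content, and it is not supplied. Also note that your claimed upper drift value $K/2$ forces the QI constant $K'$ to be at least $K$, so the oscillation cannot be made large for free.

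The paper sidesteps this entire difficulty by \emph{never returning to the identity}. Its Christmas-tree map $f$ is built inductively on subtrees $T_C(v)$ at depths divisible by $C$, using either the relative identity $I_C(v,f(v))$ or a fixed combinatorial push-out map $X_C(v,f(v))$ whose average leaf displacement $D_X$ lies in $[\tfrac{C-2}{3},\tfrac{C-1}{3}]$. Because each block is a $C$-bi-Lipschitz bijection onto a subtree rooted at $f(v)$, the global map is automatically a $(C,0)$-QI, with no boundary matching or swapping required. The quantity that oscillates is not the pointwise ``stretch factor back to $1$'' but the \emph{density} of $X_C$-blocks along a ray (set $\mc X=\bigcup_n[8^{2n},8^{2n+1})$), which makes the conditional expectation $A(qC)=qC+kD_X$ alternate between $\approx(1+\tfrac{1}{9}\cdot\tfrac{D_X}{C})qC$ and $\approx(1+\tfrac{8}{9}\cdot\tfrac{D_X}{C})qC$. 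This yields a small but explicit gap between $\liminf$ and $\limsup$ of $\e[d(1,f(Z_n))]/n$, rather than the $1/2$ versus $K/2$ gap you hoped for. If you want to pursue your route, the key missing lemma is an explicit bijective, uniformly bi-Lipschitz ``push-out then pull-in'' map on an annulus in $\F_2$ that agrees with the identity on both boundary spheres; constructing that is essentially as hard as the theorem itself, which is why the paper works with cumulative, never-undone displacement instead.
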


We prove Theorem \ref{thm:intro2} by constructing a quasi-isometry $f$ whose push-forward does not have well-defined drift.

\smallskip

\textbf{Outline.} In Section \ref{sec:pre} we recall some background on the notions we use including tame Markov chains. In Section \ref{sec:building:mc} we define the notion of length-homogeneous Markov chains and prove that they are tame Markov chains. In the following two sections, we build length-homogeneous (hence tame) Markov chains on $\mathbb F_2$ by defining transition probabilities in various annuli in the Cayley graph of $\mathbb F_2$. Considering annuli of super-linear size  leads to an example, in Section \ref{sec:no_drift}, of a tame Markov chain that does not have well-defined drift. In Section \ref{sec:no_clt} we take annuli of specific sublinear size to get a tame Markov chain that does have well-defined drift but does not satisfy a Central Limit Theorem. Lastly, in Section \ref{sec:xmastree} we build a quasi-isometry whose push-forward of a simple random walk does not have well-defined drift. To do so, we define a map $X_C$ from finite ternary tree to $\F_2$ whose ``average displacement'' is larger than the average displacement of the identity map. We then define $f$ by alternating between $X_C$ and the identity on annuli of exponentially growing size.

\smallskip
\textbf{Acknowledgments.} 
The first author was supported by the EPSRC DTA studentship EP/V520044/1. 

The authors would like to thank Alessandro Sisto for presenting us with some of these 
ideas and problems and to thank Charlotte Knierim and Annette Karrer for helpful discussions. We also thank the referee for helpful comments and for a quick report.

\section{Preliminaries}\label{sec:pre}

In this section we recall some background on random walks and tame Markov chains.

\textbf{Convention and notation:} Throughout this paper, we identify the free group $\F_2$ with its Cayley graph $\mathrm{Cay}(\F_2, \mc S)$, where $\mc S$ is the standard generating set. We define the standard length $\ell: \F_2\to \mathbb {N}$ as $\ell (g) = d(1, g)$ for all $g\in \F_2$.

We denote by $(w_n^p)_n$ a Markov chain starting at $p$ and by $w_n^p$ the position of the Markov chain starting at $p$ after $n$ steps. See \cite[Section 2.2]{GS21} for a discussion of general Markov chains.

\begin{lemma}[Chernoff bound, Theorem 1 of \cite{hoeffding1963probability}]\label{chernoff}
Let  $X_1, \ldots, X_n$ be independent random variables with $0\leq X_i\leq 1$. Then the following inequality holds for $\bar{X} = \frac{1}{n}\sum_{i=1}^n X_i$, $\mu = \e[\bar{X}]$ and for all $0< \delta < 1-\mu$:
\begin{align*}
    \p[\bar{X} -\mu \geq \delta] \leq e^{-2n\delta^2}.
\end{align*}
\end{lemma}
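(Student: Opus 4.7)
The plan is to use the exponential moment method (Chernoff's trick) and then apply Hoeffding's lemma to control the moment generating function of each summand. Write $S_n = \sum_{i=1}^n X_i$ so that $X = S_n/n$ and $n\mu = \mathbb{E}[S_n]$. Let $\mu_i = \mathbb{E}[X_i]$ and set $Y_i = X_i - \mu_i$; each $Y_i$ is a centered random variable taking values in an interval of length $1$. For any $t > 0$, Markov's inequality applied to the nonnegative random variable $e^{t(S_n - n\mu)}$ gives
\[
\mathbb{P}[X - \mu \geq \delta] = \mathbb{P}\bigl[e^{t(S_n - n\mu)} \geq e^{tn\delta}\bigr] \leq e^{-tn\delta}\,\mathbb{E}\bigl[e^{t(S_n - n\mu)}\bigr],
\]
and independence of the $Y_i$ factors the right-hand expectation as $\prod_{i=1}^n \mathbb{E}[e^{tY_i}]$.

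Next I would invoke Hoeffding's lemma: for a centered random variable $Y$ supported in $[a,b]$ one has $\mathbb{E}[e^{tY}] \leq e^{t^2(b-a)^2/8}$. Since each $Y_i$ is centered and lives in an interval of length $1$, this yields $\mathbb{E}[e^{tY_i}] \leq e^{t^2/8}$, hence
\[
\mathbb{P}[X - \mu \geq \delta] \leq \exp\!\left(-tn\delta + \tfrac{nt^2}{8}\right).
\]
The right-hand side is minimized in $t$ by choosing $t = 4\delta$ (which is positive, so the Markov inequality step is valid), producing the exponent $-2n\delta^2$ and therefore the desired bound $e^{-2n\delta^2}$. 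The constraint $\delta < 1 - \mu$ is used only to ensure that the event $\{X - \mu \geq \delta\}$ is a priori possible, so no further case analysis is needed.

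The only real content is Hoeffding's lemma, which is the step I would expect to take the most care. To prove it, I would use convexity of $y \mapsto e^{ty}$ on $[a,b]$ to write $e^{ty} \leq \frac{b-y}{b-a}e^{ta} + \frac{y-a}{b-a}e^{tb}$, take expectations using $\mathbb{E}[Y] = 0$, and reduce to showing that the function $\varphi(h) = -ph + \log(1 - p + p e^h)$ (with $p = -a/(b-a)$ and $h = t(b-a)$) satisfies $\varphi(h) \leq h^2/8$. This last inequality follows from $\varphi(0) = \varphi'(0) = 0$ together with the elementary bound $\varphi''(h) \leq 1/4$, obtained by noting that $\varphi''(h) = q(1-q)$ where $q = pe^h/(1 - p + pe^h) \in [0,1]$. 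Integrating twice gives the desired Taylor estimate and completes the proof.
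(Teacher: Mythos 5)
Your proof is correct and is the standard exponential-moment (Chernoff) argument combined with Hoeffding's lemma, which is essentially the method in Hoeffding's original 1963 paper. The paper itself does not prove this lemma — it states it as a cited black box (``Theorem 1 of [Hoeffding 1963]'') — so there is no in-paper proof to compare against; your derivation, including the optimization at $t = 4\delta$ and the Taylor-estimate proof of Hoeffding's lemma via $\varphi''(h) = q(1-q) \leq 1/4$, is the canonical one and checks out.
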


\subsection{Central Limit Theorem}

We recall what the Central Limit Theorem states.

\begin{definition}
   Let $(w^p_n)_n$ be a Markov chain on a group $G$ with a well defined drift $\ell$. We say that this Markov chain satisfies a Central Limit Theorem (CLT) if there exists a constant $\sigma>0$ such that $(d(p,w_n^p)-\ell n)/(\sigma^2 \sqrt{n}) $ converges in distribution to a normal Gaussian distribution $\mathcal N(0,1)$. 
\end{definition}

We will use the following equivalent formulation of the CLT. For all positive real numbers $z$: 
\begin{equation}
\label{eqn:CLT}
 \lim_{n \to +\infty} \p \Big[d(p,w_n^p)-\ell n \in [ -z\sigma^2\sqrt{n}, z\sigma^2\sqrt{n} ] \Big] =\Phi(z)-\Phi(-z),
\end{equation}
where $\Phi(z)$ is the standard normal cumulative distribution function.

We note that a simple random walk on $\mathbb F_2$ is known to satisfy a CLT by \cite{SawyerSteger, LedrappierRandomWalks}, (see also \cite{BenoistQuint,MathieuSisto} for a much more general setting of groups considered).

\subsection{Tame Markov chains and the push-forward of a random walk}
 We recall the definition of a \textit{tame Markov chain}, from \cite[Definition 2.3]{GS21}.
\begin{definition}[Tame Markov chain] \label{def:tame}

A Markov chain on a group $G$ is \textit{tame} if it satisfies the following conditions:
	\begin{enumerate}
    \item\label{item:bounded_jumps} {\bf Bounded jumps:} There exists a finite set $S\subseteq G$ such that $p(g,h)=\mathbb P[w^g_1=h]=0$ if $h\notin gS$.
    \item\label{item:non-amen} {\bf Non-amenability:} There exist $B>0$ and $\rho<1$ such that for all $x,y\in G$ and $n\geq 0$ we have
    $\mathbb P[w^x_n=y]\leq B\rho^n.$
  \item\label{item:irred} {\bf Irreducibility:} For all $u \in G$ there exist constants $\epsilon_u, K_u>0$ such that for all $g \in G$ we have
  $\mathbb P[w^g_k=gu] \geq \epsilon_u$
  for some $k \leq K_u$.
	\end{enumerate}
\end{definition}

\begin{remark}\label{rem:0_sum_finite}
    Non-amenability implies that a tame Markov chain $(w_n^p)_n$ satisfies $\sum_{k=0}^\infty \p[w_k^p = p]<\infty$. In particular, this implies that tame Markov chains are \emph{transient}, that is, the probability to return to the starting point is strictly smaller than $1$.

    For random walks, the irreducibility criterion captures the fact that for any given element $u$, there is a definite probability (only depending on $u$) of reaching $u$ within a definite number of steps (again, only depending on $u$). In the general setting, irreducibility not only implies that every element is reached from any other with a definite probability (depending on the two elements) but also that this probability is $G$-equivariantly bounded from below.
\end{remark}

The push-forward by a bijective quasi-isometry of a random walk on a non-amenable group is a tame Markov chain. We define what we mean by this. 

\begin{definition} Let $G$ and $H$ be finitely generated groups. Let $f:G \to H$ be a bijection. Given a Markov chain on $G$, we define a Markov chain on $H$ as follows: for all $g,h \in H$ we have $p_H(g,h)=p_G(f^{-1}(g), f^{-1}(h))$. We call this new Markov chain the push-forward by $f$.
\end{definition}

The push-forward of a random walk by a bijective quasi-isometry is a tame Markov chain:

\begin{lemma}(\cite[Lemma 2.8]{GS21})
\label{lem:push_forward_is_tame}
Let $G,H$ be finitely generated non-amenable groups and let $f:G \to H$ be a bijective quasi-isometry. Let $\mu$ be the driving measure for a random walk on $G$ with finite support and that generates $G$ as a semigroup. Then the push-forward of this random walk by $f$ is a tame Markov chain on $H$.
\end{lemma}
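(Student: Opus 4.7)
The plan is to verify the three defining conditions of Definition \ref{def:tame} for the push-forward Markov chain, defined by $p_H(g,h) = p_G(f^{-1}(g), f^{-1}(h))$, using the quasi-isometry constants $(L,C)$ of $f$ and the hypotheses on the driving measure $\mu$. Throughout, let $T := \mathrm{supp}(\mu)$, which is finite, and set $D := \max_{s\in T} d_G(e,s)$.

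For \textbf{bounded jumps}: a single step of the random walk on $G$ moves $x$ to $xs$ with $s\in T$, so $d_G(x,xs)\leq D$, and the quasi-isometry inequality yields $d_H(f(x),f(xs)) \leq LD + C$. Hence any single step of the push-forward stays in the ball of radius $LD+C$ in $H$ around its starting point, which is a finite set $S\subseteq H$.

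For \textbf{irreducibility}: given $u\in H$ and $g\in H$, set $a := f^{-1}(g)$, $b := f^{-1}(gu)$, and $v_g := a^{-1}b$. Then $d_G(e,v_g) = d_G(a,b) \leq L\,d_H(e,u) + C$, so $v_g \in F_u := B_G(e, L\,d_H(e,u) + C)$, a finite set depending only on $u$ (and $f$). Since $\mu$ has finite support and generates $G$ as a semigroup, for each $v\in F_u$ there exist $k_v\in\mathbb N$ and $\varepsilon_v>0$ with $\mu^{*k_v}(v)\geq \varepsilon_v$. Taking $K_u := \max_{v\in F_u} k_v$ and $\epsilon_u := \min_{v\in F_u} \varepsilon_v$ then yields $\p^H[w_{k_{v_g}}^g = gu] = \mu^{*k_{v_g}}(v_g) \geq \epsilon_u$ with $k_{v_g}\leq K_u$, as required.

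For \textbf{non-amenability}: by translation invariance of the random walk on $G$, we have $\p^H[w_n^g = h] = \mu^{*n}(f^{-1}(g)^{-1} f^{-1}(h))$, so it suffices to prove $\mu^{*n}(v) \leq B\rho^n$ uniformly in $v\in G$ for some $\rho < 1$. This is the step I expect to be the main obstacle. When $\mu$ is symmetric, Kesten's theorem gives $\mu^{*(2n)}(e) \leq \rho^{2n}$ (since the Markov operator on $\ell^2(G)$ has spectral radius strictly less than $1$ on a non-amenable group), and the elementary estimate $\mu^{*n}(v)^2 \leq \mu^{*(2n)}(e)$ extends the bound to arbitrary $v$. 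For general $\mu$ I would compare with its symmetrisation $\tilde\mu = \tfrac{1}{2}(\mu + \check\mu)$: its support still generates $G$ as a group, so Kesten applies to $\tilde\mu$, and a direct comparison of $\mu^{*n}$ with $\tilde\mu^{*n}$ (or equivalently the fact that the spectral radius of any finitely-supported random walk on a non-amenable group is strictly less than $1$) then yields the required exponential decay for $\mu$.
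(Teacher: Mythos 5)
The paper does not prove this lemma itself --- it cites \cite[Lemma 2.8]{GS21} --- so there is no in-paper proof to compare against. Taken on its own merits, your proposal is the natural and correct way to verify the three axioms of Definition \ref{def:tame}, and it matches the intended argument. Bounded jumps is exactly right: finite support plus the upper quasi-isometry inequality bounds a single step of the push-forward in the ball $B_H(e, LD+C)$. Irreducibility is also right, though note a small bookkeeping slip: from $\tfrac{1}{L} d_G(a,b) - C \leq d_H(g,gu) = d_H(e,u)$ you get $d_G(e,v_g) \leq L\,d_H(e,u) + LC$, not $+\,C$; this does not affect the argument since you only need a finite ball $F_u$ depending on $u$, and the semigroup-generating hypothesis is precisely what lets you reach each $v \in F_u$ in boundedly many steps with probability bounded away from zero.

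For non-amenability, your identification of the key input is correct: the exponential decay $\sup_{v} \mu^{*n}(v) \leq B\rho^n$ comes from $\mu^{*n}(v) \leq \|\mu^{*n}\|_{\ell^2} \leq \|P_\mu\|^n$ together with Kesten/Day's theorem that the Markov operator $P_\mu$ on $\ell^2(G)$ has norm strictly below $1$ whenever $G$ is non-amenable and $\mathrm{supp}\,\mu$ generates $G$. The one place I would tighten the write-up is the symmetrisation remark: passing from $\mu$ to $\tilde\mu = \tfrac{1}{2}(\mu+\check\mu)$ does not by itself give a direct comparison of $\mu^{*n}$ with $\tilde\mu^{*n}$, and using $\nu = \mu * \check\mu$ (which satisfies $\|P_\mu\|^2 = \|P_\nu\|$ and is symmetric and self-adjoint) is cleaner but requires checking that $\mathrm{supp}\,\nu$ still generates a non-amenable subgroup. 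The robust fix, which you allude to in your parenthetical, is simply to invoke the general form of Kesten's theorem for not-necessarily-symmetric measures whose support generates $G$ (see e.g.\ Woess, \emph{Random Walks on Infinite Graphs and Groups}, \S 12), which yields $\|P_\mu\| < 1$ directly. With that reference in place the proof is complete.
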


We note that in view of \cite[Lemma 2.4]{GS21}, we will use the following observation later on; for a bijective quasi-isometry $f:G \to H$ and a random walk $(Z_n)_n$ on $G$ we have $\e \big[d(1_H,f(Z_n)) \big]=\e \big[ d(1_H,w^{f(1_G)}_n)\big]$. More generally, for $h\in H$ we have that $\p[f(Z_n) = h] = \p[w_n^{f(1_G)} = h]$.
\smallskip

The following lemma shows that tame Markov chains on the free group satisfy linear progress with exponential decay, this follows from an easy counting argument.
 We note that a much more general result holds (see \cite[Theorem 1.2]{GS21}), namely that tame Markov chains make linear progress in the hyperbolic space. For the benefit of the reader, we give a sketch of proof for this result in the case of a tame Markov chain making linear progress in $\mathbb F_2$.
\begin{lemma}[Linear Progress]
\label{lem:tameness}
Let $(w^p_n)_n$ be a tame Markov chain on $\mathbb F_2$. Then there exists constants $L, C>0$ such that for all $n$, we have $$\p\big[ d_{\mathbb F_2}(p,w_n^p) >Ln \big] \geq 1-Ce^{-n/C}.$$
\end{lemma}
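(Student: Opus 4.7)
The plan is to exploit the tension between the exponential upper bound on the probability of being at any given point (non-amenability, item \eqref{item:non-amen}) and the exponential growth rate of balls in $\F_2$. Since balls in $\F_2$ around $p$ of radius $r$ contain at most $1 + 4(3^r - 1)/2 \leq C' 3^r$ points, a simple union bound will give the result, provided $L$ is chosen small enough so that $3^L \rho < 1$.

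More precisely, I would first fix $L > 0$ small enough that $3^L \rho < 1$, for instance $L = \tfrac{1}{2} \log(1/\rho)/\log 3$. Then I would write
\begin{equation*}
\p\big[ d_{\F_2}(p, w_n^p) \leq Ln \big] \;=\; \sum_{y \in B(p, Ln)} \p[w_n^p = y] \;\leq\; |B(p, Ln)| \cdot B \rho^n,
\end{equation*}
using the non-amenability condition to bound each term by $B\rho^n$. Then I would use the standard growth estimate $|B(p, Ln)| \leq C' 3^{Ln}$ for the Cayley graph of $\F_2$ (with respect to $\mc S$) to obtain
\begin{equation*}
\p\big[ d_{\F_2}(p, w_n^p) \leq Ln \big] \;\leq\; B C' (3^L \rho)^n.
\end{equation*}
Setting $q := 3^L \rho < 1$, this is at most $B C' q^n = B C' e^{-n \log(1/q)}$, and absorbing constants into a single $C$ yields the conclusion $\p[d_{\F_2}(p, w_n^p) > Ln] \geq 1 - C e^{-n/C}$.

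There is no real obstacle here: the bounded jumps assumption is not even needed, and the argument is the standard ``non-amenability beats exponential growth'' observation. The only thing to be a bit careful about is choosing the threshold $L$ strictly smaller than $\log(1/\rho)/\log 3$ so that the geometric factor $3^L \rho$ is strictly less than $1$; any strictly smaller choice works and yields explicit constants $L$ and $C$ depending only on $B$ and $\rho$.
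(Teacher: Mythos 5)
Your proof is correct and is precisely the argument the paper sketches: bound $\p[w_n^p=y]\leq B\rho^n$ by non-amenability, union-bound over the ball $B(p,Ln)$, and pick $L$ small enough that the exponential growth of $\F_2$ (at rate $3$) is beaten by the decay $\rho^n$. The paper phrases the threshold as requiring $|B(p,Ln)|\leq \rho^{-n/2}$ rather than $3^L\rho<1$, but these are the same choice.
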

\begin{proof}
   This follows from non-amenability of the Markov chain and by choosing $L$ small enough such that balls of radius $Ln$ have at most $\rho^{-n/2}$ elements.
\end{proof}
\section{Building Markov chains on $\mathbb F_2$}\label{sec:building:mc}

In this section, we introduce the notion of length-homogeneous Markov chains (see Definition \ref{def:lh}). We then prove some properties about length-homogeneous Markov chains, including that under very week assumptions, length-homogeneous Markov chains are tame. We use the results and definitions from this section in Sections \ref{sec:no_drift} and \ref{sec:no_clt}, where the Markov chains built are length-homogeneous.

\begin{definition}\label{def:lh}
 For each element $g \in \mathbb F_2$, let $\lambda_g \in [0,1]$. Let $(w^p_n)_n$ be the Markov chain defined on $\mathbb F_2$ by the following transitions probabilities. If $g=1$, then $p(g,h)=1/4$ if $\ell(h)=1$ and $p(g, h) = 0$ otherwise. If $g \neq 1$ then the probability of going from $g$ to an element $h$ in one step is given by
 
 \begin{equation*}
    p(g,h)=
    \begin{cases}
      \lambda_g & \text{if} \quad \ell(h)=\ell(g)-1 \\
      \frac{1-\lambda_g}{3} & \text{if} \quad \ell(h)=\ell(g)+1  \\
      0 &\text{otherwise.}
    \end{cases}
  \end{equation*}
 
If a Markov chain has the property that $\lambda_{g_1}=\lambda_{g_2}$ whenever $\ell(g_1)=\ell(g_2)$, we say that this Markov chain is length-homogeneous. 
\end{definition}
A simple random walk on $\mathbb F_2$ is a length-homogeneous Markov chain with $\lambda_g=1/4$ for all $g \in \mathbb F_2$. Furthermore, a simple random walk on $\mathbb F_2$ can be viewed as a (non-symmetric) random walk on $\mathbb N$. We detail in the remark below how this viewpoint can be extended to all length homogeneous Markov chains.

\begin{definition}
\label{def:MC_viewed_on_N}
Let $(w_n^{p})_n$ be a length homogeneous Markov chain on $\mathbb F_2$. We can define $(X^{\ell(p)}_n)_n$, the \emph{Markov chain corresponding to} $(w_n^p)_n$,  on $\mathbb N$, as $X^{\ell(p)}_n:=d(p, w_n^p)$. More precisely, for every integer $j\geq 1$ define the transition probability $\lambda_j$ as $\lambda_j = \lambda_g$ for some (and hence all) $g\in \mathbb F_2$ satisfying $\ell(g)=j$. The transition probabilities of $(X^{\ell(p)}_n)_n$ are given by $p(0,1)=1$, $p(j,j+1)=1-\lambda_j$ and $p(j,j-1)=\lambda_j$.
\end{definition}

The following lemma generalizes a result known about simple randoms walk on $\mathbb F_2$ (see for example the proof of \cite[Proposition 1.17]{haissinsky2013marches}) to a result about length-homogeneous Markov chains on $\F_2$.
 \begin{lemma}
 \label{lem:expectedvalue_explicit}
 Let $(w_n^p)_n$ be a length-homogeneous Markov chain on $\mathbb F_2$. Let $(X_n)_n$ be the Markov chain on $\mathbb N$ corresponding to $(w_n^p)_n$ and starting at the identity (see Definition \ref{def:MC_viewed_on_N}).
 Then, for all $1\leq k \leq n$ we have that
 	$$ 
 	\mathbb E\big[ X_n\big] =k + \mathbb E \big[ X_{n-k} \big]-2\sum_{i=1}^{k} \sum_{j = 1}^{n-i} \mathbb P \big[ X_{n-i}=j \big]\lambda_j.
 	$$
 \end{lemma}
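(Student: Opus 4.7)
The plan is to prove the base case $k=1$ by a one-step conditional expectation computation, and then iterate via induction on $k$.

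For the base case, I would condition on $X_{n-1}$ and use the transition probabilities directly. Adopting the convention $\lambda_0 = 0$ (which is consistent with the transition $p(0,1)=1$), the one-step conditional expectation reads
\begin{equation*}
\mathbb E[X_n \mid X_{n-1}] \;=\; (X_{n-1}+1)(1-\lambda_{X_{n-1}}) + (X_{n-1}-1)\lambda_{X_{n-1}} \;=\; X_{n-1} + 1 - 2\lambda_{X_{n-1}},
\end{equation*}
valid both when $X_{n-1}\geq 1$ and (trivially) when $X_{n-1}=0$. Taking expectations and using that $X_{n-1}\leq n-1$ since the chain starts at $0$, we obtain
\begin{equation*}
\mathbb E[X_n] \;=\; 1 + \mathbb E[X_{n-1}] - 2\sum_{j=1}^{n-1} \mathbb P[X_{n-1}=j]\,\lambda_j,
\end{equation*}
which is exactly the $k=1$ instance of the claimed identity.

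For the inductive step, assume the identity holds for some $k<n$. Apply the base case to $X_{n-k}$, that is, use the $k=1$ formula with $n$ replaced by $n-k$:
\begin{equation*}
\mathbb E[X_{n-k}] \;=\; 1 + \mathbb E[X_{n-k-1}] - 2\sum_{j=1}^{n-k-1} \mathbb P[X_{n-k-1}=j]\,\lambda_j.
\end{equation*}
Substituting this into the inductive hypothesis absorbs the new single sum as the $i=k+1$ term of the outer sum, producing the identity with $k$ replaced by $k+1$. This closes the induction.

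I do not expect any serious obstacle here; the statement is essentially a telescoping identity, and the only mild subtlety is handling the boundary $X_{n-1}=0$ cleanly (done via the $\lambda_0=0$ convention) and keeping track that the inner sum ranges only up to $n-i$ because the chain started at the identity. Once the one-step identity is written down, the general case is a direct induction.
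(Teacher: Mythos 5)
Your proof is correct and follows essentially the same route as the paper: establish the $k=1$ case by a one-step conditioning on $X_{n-1}$, then induct on $k$ by applying the $k=1$ identity to $X_{n-k}$ and absorbing the extra sum as the $i=k+1$ term. The only cosmetic difference is that you handle the boundary state via the convention $\lambda_0=0$, whereas the paper treats the event $X_{n-1}=0$ as a separate summand; both give the same computation.
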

 
 \begin{proof}
 We first show that the result holds for any $n$ if $k=1$ and then prove it for a fixed $n$ by induction on $k$. We have that
$$ \mathbb E\big[X_n-X_{n-1}\big]=\mathbb E\left[\left(X_n-X_{n-1}\right)\mathds{1}_{X_{n-1}=0}\right]+\sum_{j=1}^{n-1}\mathbb E\left[\left(X_n-X_{n-1}\right)\mathds{1}_{X_{n-1}=j}\right].$$
Now, if $X_{n-1}=0$ then $X_n=1$ and so $\mathbb E\left[\left(X_n-X_{n-1}\right)\mathds{1}_{X_{n-1}=0}\right]=\mathbb P \big[ X_{n-1}=0 \big]$. If $X_{n-1}=j$, then with probability  $1-\lambda_j$ we have that $X_n -X_{n-1}=1$ and with probability $\lambda_j$, we have that $X_n -X_{n-1}=-1$. Therefore for all $j \geq 1$, we get that $$\mathbb E\left[\left(X_n-X_{n-1}\right)\mathds{1}_{X_{n-1}=j}\right]=\mathbb P \big[X_{n-1}=j\big]\left( 1-\lambda_j-\lambda_j \right)=\mathbb P \big[X_{n-1}=j\big]\left( 1-2\lambda_j\right).$$ Therefore:
\begin{align*}
\begin{split}
	\mathbb E\big[X_n\big]-\mathbb E\big[ X_{n-1}\big]&=\mathbb P \big[ X_{n-1}=0 \big]+ \sum_{j=1}^{n-1}\mathbb P \big[X_{n-1}=j\big]\left( 1-2\lambda_j\right) \\
	&= 1-2\sum_{j=1}^{n-1} \mathbb P \big[X_{n-1}=j\big] \lambda_j.
\end{split}	
\end{align*}
This proves the result for $k=1$. Now, fix $n$ and assume the lemma holds for $k < n$. We want to prove it holds for $k+1$. Since the statement holds for any $n$ if $k=1$, we have that 
\begin{align}\label{eq:exp1}
\mathbb E \big[ X_{n-k} \big]= 1+\mathbb E\big[X_{n-k-1} \big]-2\sum_{j=1}^{n-k-1} \mathbb P \big[X_{n-k-1}=j\big] \lambda_j
\end{align}

Combining \eqref{eq:exp1} with the assumption that the statement holds for $k$ we get
\begin{align*}
	\begin{split}
		\mathbb E\big[ X_n\big] &=k + \mathbb E \big[ X_{n-k} \big]-2\sum_{i=1}^{k} \sum_{j = 1}^{n-i} \mathbb P \big[ X_{n-i}=j \big]\lambda_j \\
		&=k+1+\mathbb E\big[X_{n-k-1} \big]-2\sum_{j=1}^{n-k-1} \mathbb P \big[X_{n-k-1}=j\big] \lambda_j -2\sum_{i=1}^{k} \sum_{j = 1}^{n-i} \mathbb P \big[ X_{n-i}=j \big]\lambda_j \\
		&=k+1+\mathbb E\big[X_{n-k-1} \big]-2\sum_{i=1}^{k+1} \sum_{j = 1}^{n-i} \mathbb P \big[ X_{n-i}=j \big]\lambda_j.
	\end{split}
\end{align*}

This proves the lemma.
\end{proof}

The following lemma shows that under some fairly weak assumptions on the transition probabilities, length-homogeneous Markov chains are tame.
 
  \begin{lemma}
  \label{lem:ourMCS_are_tame}
  Let $(w_n^p)_n$ be a length homogeneous Markov chain where $0< \lambda_g\leq 1/4$ for all $g\in \F_2$ and $\inf_{g\in \F_2}\{\lambda_g\}>0$.  Then $(w_n^p)_n$ is a tame Markov chain. 
 \end{lemma}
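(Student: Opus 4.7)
My plan is to verify the three conditions of Definition \ref{def:tame} in turn. The bounded-jumps condition holds trivially with $S = \mc S$ (the chain only transitions between Cayley-graph neighbours). For irreducibility, note that every one-step transition probability is at least $\lambda_{\min} := \inf_g \lambda_g > 0$ (downward transitions are $\lambda_g \geq \lambda_{\min}$; upward transitions are $(1-\lambda_g)/3 \geq 1/4 \geq \lambda_{\min}$, after WLOG assuming $\lambda_{\min} \leq 1/4$), so following the unique geodesic in the Cayley graph from $g$ to $gu$ gives $\p[w_{\ell(u)}^g = gu] \geq \lambda_{\min}^{\ell(u)}$, providing the required $\epsilon_u$ and $K_u$.

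The substantive step is non-amenability: $\p[w_n^p = y] \leq B\rho^n$ for all $p, y, n$. I plan to combine a drift estimate with a symmetry estimate. For the drift estimate, the projected chain $Y_n := \ell(w_n^p)$ is a random walk on $\mathbb N$ with upward transition probability $1 - \lambda_j \geq 3/4$ at every state $j \geq 1$; coupling with an i.i.d.\ $\pm 1$ walk $V_n$ of bias $3/4$ yields $Y_n - \ell(p) \geq V_n$ almost surely, and Lemma \ref{chernoff} then gives $\p[Y_n \leq \ell(p) + n/4] \leq e^{-cn}$ for an explicit $c > 0$. For the symmetry estimate, since the transitions depend on $g$ only through $\ell(g)$, the chain is invariant under every automorphism of the Cayley graph of $\F_2$ (viewed as a $4$-regular tree) that fixes the identity. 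Applying such an automorphism I reduce WLOG to $p = a^{\ell(p)}$, and the stabiliser $G_{1,p}$ of both $1$ and $p$ fixes the geodesic from $1$ to $p$ and acts transitively on each shell of each off-geodesic subtree hanging off this geodesic, so $\p[w_n^p = \cdot\,]$ is constant on $G_{1,p}$-orbits; a direct tree count then shows that for $\ell(y) > \ell(p)$, the $G_{1,p}$-orbit of $y$ has size at least $3^{\ell(y) - \ell(p)}$, giving $\p[w_n^p = y] \leq 3^{-(\ell(y)-\ell(p))}$.

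Combining: if $\ell(y) < \ell(p) + n/4$, the drift estimate gives $\p[w_n^p = y] \leq \p[Y_n = \ell(y)] \leq e^{-cn}$ (absorbing the degenerate case where $y$ lies on the geodesic from $1$ to $p$, whose $G_{1,p}$-orbits are trivial); if $\ell(y) \geq \ell(p) + n/4$, the symmetry estimate gives $\p[w_n^p = y] \leq 3^{-n/4}$. Setting $\rho = \max(e^{-c}, 3^{-1/4}) < 1$ and choosing $B$ large enough to cover small $n$ then completes the proof. The main obstacle will be the symmetry step: correctly identifying the relevant automorphism group (graph automorphisms of the Cayley graph of $\F_2$, not group automorphisms), justifying the reduction to $p = a^{\ell(p)}$, and carefully verifying the orbit-size lower bound across the branching cases at the endpoints and interior of the geodesic.
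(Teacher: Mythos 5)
Your proposal is correct, and for bounded jumps and irreducibility it matches the paper's proof (geodesic induction with $\epsilon_u = \lambda_{\min}^{\ell(u)}$, $K_u = \ell(u)$). For non-amenability, however, you take a genuinely different route. The paper tracks the distance $d(w_i^x, y)$ to the \emph{target}: at every step the probability of decreasing this distance is at most $\max\{\lambda_g,\, (1-\lambda_g)/3\}\leq 1/3$, and a single Chernoff bound (after dominating the dependent indicators by i.i.d.\ Bernoulli$(1/3)$ variables) shows that $d(w_n^x, y) > d(x,y)\geq 0$ with probability $\geq 1 - e^{-n/18}$, hence $\p[w_n^x = y]\leq e^{-n/18}$ with no case split and no symmetry argument. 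Your version instead tracks $\ell(w_n^p)$, distance to the \emph{basepoint}. This Chernoff/coupling step only controls the distribution of $\ell(w_n^p)$, not of $w_n^p$ itself, which is why you need the second ingredient: invariance of the chain under graph automorphisms of the $4$-regular tree fixing the identity, a WLOG reduction to $p = a^{\ell(p)}$, and the orbit-size estimate $|G_{1,p}\cdot y|\geq 3^{\ell(y)-\ell(p)}$ to spread the probability over a large sphere when $\ell(y)$ is large. Both approaches are valid; the paper's is shorter precisely because measuring distance to $y$ rather than to $1$ sidesteps the multiplicity-of-vertices issue entirely, whereas yours cleanly separates the two mechanisms of decay (drift in $\ell$ and uniformity on spheres), which is arguably more illuminating but requires correctly handling the branching cases at the endpoints and interior of $[1,p]$ (the bound $3^{\ell(y)-\ell(p)}$ is tight exactly when the nearest point of $[1,p]$ to $y$ is $p$ itself, and holds with room to spare at interior vertices because then $d(y, [1,p]) \geq \ell(y)-\ell(p)+1$). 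You should also note that the two exponential rates differ, so the final $\rho$ is a maximum of two constants; this is fine but should be stated, and the $B$-for-small-$n$ remark is actually unnecessary since both estimates hold for every $n$.
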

 
 \begin{proof}
 	{\bf 1.  Bounded jumps:} This is clear from the definition of the transition probabilities.

 	{\bf 2.  Non-amenability:} Let $n$ be an integer and $x, y\in \F_2$. At every step, the probability of getting closer to $y$ is at most $1/3$. We want to use the Chernoff bound (Lemma \ref{chernoff}) to show that the probability of reaching $y$ decays exponentially. Below we formalize this idea. For $1\leq i \leq n$ define the random variable $Y_i$ as follows: 
        \begin{align*}
            Y_i = \begin{cases}
                1 &\text{if $d(w_i^x, y) = d(w_{i-1}^x, y)-1$}\\
                0 & \text{if $d(w_i^x, y) = d(w_{i-1}^x, y)+1$}.
            \end{cases}
        \end{align*}
        Furthermore, let $\bar{Y} = \frac{1}{n} \sum_{i=1}^n Y_i$. If $\bar{Y}< 1/2$, then $d(w_n^x, y)>d(w_0^x, y)\geq 0$ and hence $w_n^x\neq y$. Not all edges have the same probability of getting used. In fact, at a certain step, it depends on the earlier steps of the Markov chain whether the edge to get closer has a high or low probability of getting taken (since we might approach from different directions). Thus, the $Y_i$ are unfortunately not independent, which prohibits us from directly using Lemma \ref{chernoff}. However, if, at every step we ``fill up'' the probability of ``getting closer'' to $1/3$, then the random variables become independent. Formally,
        for $1\leq i \leq n$ define $Z_i$ as independent random variables such that $\p[Z_i = 1] = 1/3$ and $\p[Z_i = 0] = 2/3$ and if $Z_i = 0$, then $Y_i = 0$. We can do this since distinct steps in the random walk are independent and at every step (conditioning on already knowing the previous steps) we have that 
        $$\p[Y_i = 1 | w_{i-1}^x]\leq \max\left\{\frac{1}{4}, \lambda_{w_{i-1}^x}, \frac{1- \lambda_{w_{i-1}^x}}{3}\right\}\leq \frac{1}{3}.$$
        Define $\bar{Z} = \frac{1}{n}\sum_{i=1}^n Z_i$. We have for every $i$ that $Z_i \geq Y_i$ and hence $ \p[\bar{Y}\geq 1/2]\leq \p[\bar{Z}\geq 1/2]$. We can use the Chernoff bound (Lemma \ref{chernoff}) with $\delta = 1 /6$ to get that 
        \begin{align*}
            \p\left[\bar{Z}\geq \frac{1}{2} \right]\leq \left (e^{-1/18}\right)^n,
        \end{align*}
        where we used that $\mu = \e[\bar{Z}] = 1/3$. Thus, setting $C = 1$ and $\rho = e^{-1/18}< 1$ we have that $$\p\left[w_n^x = y\right]\leq \p\left[\bar{Y} \leq \frac{n}{2}\right]\leq\p\left[\bar{Z}\leq \frac{n}{2}\right] \leq C\rho^n.$$
  
 	{\bf 3.  Irreducibility:} Let $\lambda:=\inf_{g\in F_2}\{\lambda_g \}$. Note that by assumption, $\lambda>0$. Let $n = \ell(u)$, we show that $k=K_u = n$ and $\epsilon_u = \lambda^n$ satisfy the constant requirements of the irreducibility criterion of Definition \ref{def:tame}--(\ref{item:irred}). We prove this by induction on the length $\ell(u)$ of elements $u \in \mathbb F_2$. For $\ell(u)=1$ (i.e. $u$ is a standard generator) we have $\p[w^g_1=gu] \geq \lambda_g \geq \lambda$. Now, we assume it holds for all $u$ such that $\ell(u)=n$. Let $u$ be such that $\ell(u)=n+1$ and write $u=s_1s_2\dots s_{n}s_{n+1}$ where each $s_i$ is a generator of $\mathbb F_2$. Let $g \in \mathbb F_2$. We have that  
 $$\p[w_{n+1}^g=gu]\geq \p[w_{n+1}^{g}=gu \vert w_{n}^g=gs_1\dots s_n]\p\big[w_{n}^g=gs_1\dots s_n] \geq \lambda \epsilon_{us_{n+1}^{-1}}=\lambda^{n+1} = \epsilon_u.$$
By induction on $\ell(u)$, we have proved the irreducibility criterion.
 \end{proof}

\section{A Markov chain with no well-defined drift}\label{sec:no_drift}

In this section, we build length-homogeneous Markov chains that do not have well-defined drift. We note that by Lemma \ref{lem:ourMCS_are_tame}, all of those Markov chains are tame.
\smallskip

 \textbf{Construction:} Fix $0<\lambda <1/4$. Define $N_{-1} = -1$ and for all integers $s\geq 0$ define $N_s = 2^{s^2}$. Define the length-homogeneous Markov chain $(w_n^p)_n$ on $\mathbb F_2$ as follows; for all $g\in F_2$, 
\begin{equation*}
    \lambda_g=
    \begin{cases}
      \lambda& \text{if $\ell(g) \in (N_{s-1}, N_{s}]$ for some odd integer $s\geq 0$}, \\
      \lambda/2& \text{if $\ell(g) \in (N_{s-1}, N_{s}]$ for some even integer $s\geq 0$}. \\
    \end{cases}
  \end{equation*}

As detailed in Definition \ref{def:MC_viewed_on_N}, we can look at the Markov chain corresponding to $(w_n^1)_n$, which we denote $(X_n)_n$. Recall that with this definition, $(X_n)_n$ is a Markov chain on $\mathbb N$ and we have
\begin{equation*}
    \lambda_i=
    \begin{cases}
      \lambda& \text{if $i \in (N_{s-1}, N_{s}]$ for some odd integer $s\geq 0$}, \\
      \lambda/2& \text{if $i\in (N_{s-1}, N_{s}]$ for some even integer $s\geq 0$}. \\
    \end{cases}
  \end{equation*}

We now show that the Markov chain $(w_n^1)_n$, although tame by Lemma \ref{lem:ourMCS_are_tame}, does not have a well-defined drift. 
\begin{proposition}
    We have that $$\limsup_{n \to +\infty} \frac{\mathbb E\big[ X_n\big]}{n}  >\liminf_{n \to +\infty} \frac{\mathbb E\big[ X_n\big]}{n}. $$ In particular, the drift of the tame Markov chain $(w_n^1)_n$ is not well-defined.
\end{proposition}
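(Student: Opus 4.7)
The plan is to exhibit two deterministic subsequences of times along which $\mathbb{E}[X_n]/n$ converges to two distinct limits. For $s \geq 1$, set $v_s := 1-2\lambda$ when $s$ is odd and $v_s := 1-\lambda$ when $s$ is even, and let $n_s := \lfloor N_s/v_s\rfloor$. I will show that $\mathbb{E}[X_{n_s}]/n_s \to v_s$ as $s \to \infty$, which immediately yields $\limsup_n \mathbb{E}[X_n]/n \geq 1 - \lambda$ along even $s$ and $\liminf_n \mathbb{E}[X_n]/n \leq 1 - 2\lambda$ along odd $s$; since $\lambda > 0$ these values differ, and the proposition follows.

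The key step is that the first hitting time $T_s := \inf\{m \geq 0 : X_m = N_s\}$ is tightly concentrated around $n_s$. Writing $T_s = \tau_1 + \cdots + \tau_s$ with $\tau_k := T_k - T_{k-1}$, while in annulus $k$ the chain $(X_m)_m$ behaves as a biased nearest-neighbour walk with downward probability $\lambda_k \in \{\lambda,\lambda/2\}$, so standard computations for biased walks give
\[
\mathbb{E}[\tau_k] = \frac{N_k-N_{k-1}}{v_k} + O(1), \qquad \mathrm{Var}(\tau_k) = O(N_k - N_{k-1}).
\]
The super-exponential growth $N_k = 2^{k^2}$ makes all terms with $k < s$ negligible, yielding $\mathbb{E}[T_s] = n_s + o(N_s)$ and $\mathrm{Var}(T_s) = O(N_s)$. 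Chebyshev's inequality then gives $|T_s - n_s| = o(N_s)$ with probability $1-o(1)$, and since the walker moves by at most $1$ per step and $X_{T_s} = N_s$ by definition, this forces $|X_{n_s} - N_s| \leq |n_s - T_s| = o(N_s)$ on the same event. Together with the trivial bound $X_{n_s} \leq n_s = O(N_s)$ on the complementary (negligible-probability) event, I would conclude $\mathbb{E}[X_{n_s}] = N_s + o(N_s)$, and hence $\mathbb{E}[X_{n_s}]/n_s \to N_s/n_s = v_s$.

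The main obstacle is making the estimate $\mathbb{E}[\tau_k] = (N_k - N_{k-1})/v_k + O(1)$ precise, since the walker can dip below $N_{k-1}$ into a region where the downward probability changes. I would handle this by a direct excursion analysis: the uniform upward bias $1 - 2\lambda_g \geq 1 - 2\lambda > 0$ at every position of every annulus makes excursions below $N_{k-1}$ have geometrically decaying depth and duration, so their aggregate contribution to both $\mathbb{E}[\tau_k]$ and $\mathrm{Var}(\tau_k)$ is $O(1)$. Summing these $O(1)$ corrections over $k \leq s$ gives only an $O(s)$ error, which is dwarfed by the $o(N_s)$ tolerance allowed by the super-exponential growth of $N_s$.
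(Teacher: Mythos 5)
Your hitting-time approach is genuinely different from the paper's. The paper's proof works algebraically: it derives the explicit telescoping identity for $\mathbb{E}[X_n]$ in Lemma 3.4 and evaluates it at the deterministic times $N_s$ with cutoff $k_s = 3N_s/4$, using the linear-progress estimate (Lemma 2.9) to know that for most of the last $k_s$ steps the chain sits inside annulus $s$; this gives the coarse separation $\liminf \leq 1-\tfrac{7}{4}\lambda < 1-\tfrac{5}{4}\lambda \leq \limsup$. Your argument instead concentrates the first-passage time $T_s$ to level $N_s$ around $n_s = N_s/v_s$, then uses the $\pm 1$ increments to localize $X_{n_s}$ near $N_s$, giving the tighter subsequential limits $\mathbb{E}[X_{n_s}]/n_s \to v_s$, i.e.\ $1-\lambda$ and $1-2\lambda$; so your version, if completed, actually pins down the extreme accumulation points, which the paper's bounds do not. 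The decomposition $T_s = \sum \tau_k$ with the $\tau_k$ independent by the strong Markov property, the super-exponential gap $N_{s-1}/N_s \to 0$ killing the earlier-annulus contributions, and the Chebyshev-plus-trivial-bound step to convert high-probability localization into an estimate on $\mathbb{E}[X_{n_s}]$ are all sound.

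The one place that needs real work, and where you are currently leaning on a sketch, is the pair of moment bounds $\mathbb{E}[\tau_k] = (N_k-N_{k-1})/v_k + O(1)$ and $\mathrm{Var}(\tau_k) = O(N_k - N_{k-1})$. The chain restricted to the slab $(N_{k-1}, N_k]$ is not a homogeneous biased walk: starting at $N_{k-1}$ it is actually still in annulus $k-1$, and excursions below $N_{k-1}$ land in regions with a different (possibly smaller) upward bias. Your excursion idea is the right fix — transience bounds the number of returns to $N_{k-1}$ geometrically, and the uniform bias $1-2\lambda > 1/2$ on all of $\mathbb{N}$ gives each excursion an exponential tail in depth and duration — but turning ``geometrically decaying depth and duration'' into a second-moment bound for $\tau_k$ requires controlling a random sum of excursion lengths (Wald-type argument or stochastic domination by a walk with the worst-case downward probability $\lambda$ and an exponential tail inequality), not just decay of individual excursions. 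That is the technical content your proposal defers, and it is more involved than the paper's one-line application of its Lemmas 3.4 and 2.9.
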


\begin{proof}

In order to show that the drift of $(w_n^1)_n$ is not well-defined, we find an upper respectively a lower bound for $\mathbb E\left[ X_{N_s}\right]$ for odd $s$ and even $s$ respectively. This then allows us to look at the limit inferior and limit superior for $\frac{\mathbb E\left[ X_{n}\right]}{n}$ and show that they differ. Since $\e[X_n] = \e[d(1, w_n^1)]$, this then implies that $(w_n^1)_n$ does not have well-defined drift. 
\smallskip

First note that using Lemma \ref{lem:expectedvalue_explicit} for $k = n$ we have for any integer $n$ that 

\begin{align}\label{eq:exp_upper_bound}
    \e[X_n] = n  - 2\sum_{i=1}^n\sum_{j=1}^{n-i}\p[X_{n-i} = j]\lambda_j\leq n  - 2\sum_{i=1}^n\p[X_{n-i} \neq 0]\lambda/2 \leq n -n\lambda +\lambda \kappa,
\end{align}
where $\kappa = \sum_{i=1}^\infty \p[X_{i} = 0]$, which is finite by Remark \ref{rem:0_sum_finite}. Furthermore, again using Lemma \ref{lem:expectedvalue_explicit} for $k=n$ we have that
\begin{align}\label{eq:exp_lower_bound}
    \e[X_n] = n - 2\sum_{i=1}^n\sum_{j=1}^{n-i}\p[X_{n-i} = j]\lambda_j
    \geq  n - 2\sum_{i=1}^n\sum_{j=0}^{n-i}\p[X_{n-i} = j]\lambda  \geq n - 2\lambda n.
\end{align}

Let $L, C$ be the constants from Lemma \ref{lem:tameness} for this tame Markov chain. For the rest of the proof we only consider integers $s$ which are large enough, that is integer that satisfy $2^{2s-3}> 1/L$. Define $k_s = 3N_s/4$ we have that $L(N_s - k_s) > N_{s-1}$. In particular, for $N_s-k_s\leq j \leq N_s$ we have that $\lambda_j = \lambda$ if $s$ is odd and $\lambda_j = \lambda/2$ if $s$ is even.

Next we use Lemma \ref{lem:expectedvalue_explicit} applied to $k_s \leq N_s$ to get a lower bound for $\mathbb E\left[ X_{N_s}\right]$ if $s$ is odd and an upper bound if $s$ is even. Namely, for $1\leq i \leq k_s$ we have that if $s$ is odd

\begin{align}
    \sum_{j=1}^{N_s - i}\p[X_{N_s -i} = j]\lambda_j &\geq  \sum_{j=L(N_s-i)}^{N_s - i}\p[X_{N_s -i} = j]\lambda_j = \lambda\p[X_{N_s-i}\geq L(N_s-i)]\\
    &\geq (1 - Ce^{-(N_s-i)/C})\lambda\geq (1 - Ce^{-(N_s-k_s)/C})\lambda.\label{eq:sum:lb}
\end{align}
Where we used Lemma \ref{lem:tameness} to go from the first to the second line. Hence using \eqref{eq:exp_upper_bound} and \eqref{eq:sum:lb} we can upper bound $\e[X_{N_s}]$ for odd $s$ as follows, 
\begin{align*}
    \e[X_{N_s}] &= k_s +\e[X_{N_s - k_s}] - 2\sum_{i=1}^{k_s}\sum_{j=1}^{N_s - i}\p[X_{N_s-i} = j]\lambda_j\\
    &\leq k_s +(N_s - k_s)(1 - \lambda) + \lambda \kappa - 2k_s(1 - Ce^{-(N_s-k_s)/C})\lambda\\
    &=N_s(1 - \lambda) - k_s\lambda + \lambda \kappa +2k_sCe^{-(N_s-k_s)/C}\lambda.
\end{align*}
Now we can upper bound the limit inferior, 
\begin{align*}
    \liminf \frac{\mathbb E\left[ X_n\right]}{n} \leq \liminf_{\text{$s$ odd}}\frac{\mathbb E\left[ X_{N_s}\right]}{N_s}  \leq \lim_{\substack{s\to \infty\\\text{$s$ odd}}}\frac{N_s(1 - \lambda) - k_s\lambda + \lambda \kappa +2k_sCe^{-(N_s-k_s)/C}\lambda}{N_s} = 1 - \frac{7}{4}\lambda.
\end{align*}

On the other hand, if $s$ is even we have for $0\leq i \leq k_s$ that 
\begin{align}
    \sum_{j=1}^{N_s - i}\p[X_{N_s -i} = j]\lambda_j &\leq \p[X_{N_s-i}< L(N_s-i)]\lambda + \p[X_{N_s - i}\geq L(N_s - i)]\lambda/2\\
    &\leq Ce^{-(N_s-i)/C}\lambda/2 + \lambda/2 \leq Ce^{-(N_s-k_s)/C}\lambda/2 + \lambda/2.\label{eq:sum_ub}
\end{align}
To go from the first to the second line, we used Lemma \ref{lem:tameness} and the fact that $\p[X_{N_s - i}< L(N_s - i)]+\p[X_{N_s - i}\geq L(N_s - i)] = 1$. Next we use \eqref{eq:exp_lower_bound} and \eqref{eq:sum_ub} calculate a lower bound for $\e[X_{N_s}]$ for even $s$ as follows,
\begin{align*}
    \e[X_{N_s}] &= k_s +\e[X_{N_s - k_s}] - 2\sum_{i=1}^{k_s}\sum_{j=1}^{N_s - i}\p[X_{N_s-i}=j]\lambda_j\\
    &\geq k_s +(N_S - k_s)(1 - 2\lambda) - k_s(  Ce^{-(N_s-k_s)/C}\lambda + \lambda).
\end{align*}

This allows us to calculate a lower bound for the limit superior of $\e[X_n]/n$ as follows

\begin{align*}
    \limsup \frac{\mathbb E\left[ X_n\right]}{n} \geq \limsup_{\text{$s$ even}}\frac{\mathbb E\left[ X_{N_s}\right]}{N_s}  \geq \lim_{\substack{s\to \infty\\\text{$s$ even}}}\frac{k_s +(N_s - k_s)(1 - 2\lambda) - k_s(  Ce^{-(N_s-k_s)/C}\lambda + \lambda)}{N_s} = 1 - \frac{5}{4}\lambda.
\end{align*}

This shows that the limit superior and limit inferior of $\e[X_n]/n = \e[d(1, w_n^1)]/n$ do not agree and hence the drift of $(w_n^1)_n$ does not exist.
\end{proof}

\section{Well-defined drift but no CLT }\label{sec:no_clt}
In this subsection, we construct a tame Markov chain on $\F_2$ that has a well-defined drift but doesn't satisfy a CLT.  

\smallskip

\textbf{Construction:} Let $N_1>2^6$ be a natural number and let $(N_s)_s$ be given by $N_{s+1}=4^sN_1$. For each positive integer $s$ define the interval $B_{s}$ as $B_{s}=[\frac{1}{2}N_s-N_s^{5/6}, \frac{1}{2}N_s+N_s^{5/6}] \subset \mathbb N$. We note that by the choice of $N_1$, we get that $N_s/4<N_s/2 - N_s^{5/6}\leq N_s/2 +N_s^{5/6}< N_s$. In particular, for distinct $s, s'$ we have $B_s \cap B_{s'} = \emptyset$. Let $B=\cup_{s} B_s$.
 
 Fix $\lambda<1/4$. We define the length-homogeneous Markov chain $(z_n^p)_n$ on $\mathbb F_2$ by defining
\begin{equation*}
     \lambda_g=
    \begin{cases}
      \lambda & \text{if $\ell(g)\in B$}  \\
      1/4 & \text{otherwise}.  \\
         \end{cases}
\end{equation*}

\begin{lemma}
\label{lem:well_defined_drift}
	The Markov chain $(z^1_n)_n$ is tame and has a well-defined drift, which is equal to $\frac{1}{2}$. 
\end{lemma}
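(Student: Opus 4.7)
Tameness is immediate from Lemma \ref{lem:ourMCS_are_tame}, since $\lambda_g$ takes only the two values $\lambda$ and $1/4$, both positive and bounded above by $1/4$. For the drift, I would pass to the associated $\mathbb N$-valued chain $(X_n)_n$ with $X_n = d(1, z_n^1)$ (Definition \ref{def:MC_viewed_on_N}) and apply Lemma \ref{lem:expectedvalue_explicit} with $k = n$. Writing $\lambda_j = \tfrac{1}{4} - (\tfrac{1}{4} - \lambda)\mathds{1}_{j \in B}$ for every $j \geq 1$ and using that $\sum_{j \geq 0}\p[X_k = j] = 1$, a short manipulation would give
\begin{equation*}
\frac{\e[X_n]}{n} = \frac{1}{2} + \frac{1}{2n}\sum_{k=0}^{n-1}\p[X_k = 0] + \frac{\tfrac{1}{2} - 2\lambda}{n}\sum_{k=0}^{n-1}\p[X_k \in B].
\end{equation*}
It thus suffices to show that both correction terms tend to $0$.

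The first is handled by Remark \ref{rem:0_sum_finite}: tame chains are transient, so $\sum_k \p[X_k = 0] < \infty$, and dividing by $n$ gives $o(1)$. For the second, I would swap the order of summation,
\begin{equation*}
\sum_{k=0}^{n-1}\p[X_k \in B] = \sum_{j \in B \cap [1, n-1]} G(j), \qquad G(j) := \sum_{k \geq 0}\p[X_k = j],
\end{equation*}
and then argue that $G(j)$ is bounded uniformly in $j$ by some constant $M$. Given such an $M$, the geometric growth $N_t = 4^{t-1}N_1$ ensures that for $n \leq N_{s+1}$ only the bands $B_1, \ldots, B_{s+1}$ meet $[1, n-1]$, and their total size is dominated geometrically by $|B_{s+1}| = O(N_{s+1}^{5/6})$. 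Hence $\sum_{k=0}^{n-1}\p[X_k \in B] \leq M \cdot O(n^{5/6}) = o(n)$, as required.

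The key step, and the one I expect to be the main obstacle, is the uniform bound $G(j) \leq M$. I would prove it by checking that $e^{-\alpha X_n}$ is a supermartingale for a small enough $\alpha > 0$; this is possible precisely because $\lambda_j \leq 1/4$ uniformly, so the one-step drift $1 - 2\lambda_j$ is bounded below by $1/2 > 0$. Optional stopping applied to the first return time to $j$ (using that $X_n \to \infty$ almost surely, which follows from Lemma \ref{lem:tameness}) then gives a uniform upper bound $c < 1$ on the probability of ever returning to $j$ from $j$, whence $G(j) \leq 1/(1-c) =: M$. Once this uniform Green-function bound is in hand, the rest of the argument is bookkeeping exploiting the super-polynomial gap between the scales $N_s$ and the widths $|B_s| = O(N_s^{5/6})$.
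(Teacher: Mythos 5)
Your proposal mirrors the paper's argument almost step for step: tameness from Lemma~\ref{lem:ourMCS_are_tame}, the same application of Lemma~\ref{lem:expectedvalue_explicit} with $k=n$ and the decomposition $\lambda_j = \tfrac14 - a_j$, the same swap of summation order reducing the problem to a uniform bound on the Green function $G(j)=\sum_k\p[X_k=j]$ multiplied by $|B\cap[1,n]|=O(n^{5/6})$, and the same sandwich conclusion. The one place you diverge is the proof of the uniform Green-function bound: the paper couples $(X_n)_n$ against the biased walk on $\mathbb N$ with down-probability $1/4$ and shows $\p[Z\geq k]\leq D^{k-1}$ with $D=1/4+3q/4<1$, whereas you propose $e^{-\alpha X_n}$ as a supermartingale plus optional stopping. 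Both routes rest on the same fact ($\lambda_j\leq 1/4$ uniformly) and both are standard; your version is slightly slicker to state. One caution if you carry it out: applying optional stopping to $e^{-\alpha X_n}$ directly at the first return time from $j$ to $j$ is vacuous, since both sides equal $e^{-\alpha j}$. You need to first condition on the first step to $j\pm 1$ (trivial from $j-1$ since $X_n\to\infty$ a.s.\ forces hitting $j$; from $j+1$ optional stopping at the hitting time of $j$ gives probability $\leq e^{-\alpha}$) and then combine: $\p[\text{return}]\leq \lambda_j + (1-\lambda_j)e^{-\alpha}\leq 1/4 + \tfrac34 e^{-\alpha}<1$. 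This is exactly the first-step conditioning the paper performs explicitly in its Claim.
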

\begin{proof}
Lemma \ref{lem:ourMCS_are_tame} shows that $(z_n^1)_n$ is tame. So it remains to prove that its drift is equal to $1/2$. Let $a = 1/4  -\lambda$ and define $a_j = a$ for $j\in B$ and $a_j=0$ otherwise. With this notation, the corresponding Markov chain $(X_n)_n$ on $\mathbb N$ starting at the identity (see Definition \ref{def:MC_viewed_on_N}) satisfies $\lambda_j = 1/4 - a_j$ for all $j$.

We now use Lemma \ref{lem:expectedvalue_explicit} for $k=n$. Observing that $\e[X_0] = 0$ and using $h = n-i$ we get that, 
\begin{equation}
\label{eqn:expected_value}
    \e[X_n] =  n  - 2 \sum_{h=0}^{n-1}\sum_{j=1}^{h}\p[X_h = j](1/4 - a_j) = n-\frac{1}{2}\sum_{h=0}^{n-1} \p\big[X_h \neq 0 \big]+2\sum_{h=0}^{n-1}\sum_{j=1}^{h}\p[X_h = j] a_j.
\end{equation}

Now, as this Markov chain is tame (and hence satisfies the non-amenability criterion), we have $\p \big[ X_h =0\big] \leq C\rho^{h}$ for some constant $C$ and $\rho <1$. Hence
$$ \sum_{h=0}^{n-1} \p\big[X_h \neq 0 \big] \geq n - C\frac{1}{1-\rho}.$$ 
Next we determine an upper bound for the term $\sum_{h=0}^{n-1}\sum_{j=1}^{h}\p[X_h = j] a_j$. To do so, we first swap the order of the sums to get $\sum_{j=1}^{n-1}\sum_{h=j}^{n-1}\p[X_h = j] a_j.$

For a fixed $j$, we bound $\sum_{h=j}^{n-1}\p[X_h = j]$ by bounding $\sum_{h=0}^{\infty}\p[X_h = j]$. We define a new random variable $Z_h$ as 
\begin{equation*}
    \ Z_h=
    \begin{cases}
      1 & \text{if $X_h =j$}  \\
      0 & \text{otherwise.}  \\
    \end{cases}
\end{equation*}

Let $Z$ be the random variable denoting the number of steps $h$ such that $X_h=j$, that is $Z =\sum_{h=0}^{\infty} Z_h$. By linearity of expectation, 
$$ \e\big[ Z\big]= \sum_{h=0}^{\infty} \e\big[ Z_h\big]=\sum_{h=0}^{\infty} \p\big[ X_h=j\big].$$ 

Note that for $k\geq 0$, we have that $\p[Z \geq k] = \sum_{h=k}^{\infty}\p[Z= k]$ and hence
\begin{align}\label{ch3:eq1}
    \e\big[ Z\big]= \sum_{k=1}^{\infty}k\p[Z = k] = \sum_{k=1}^{\infty} \p \big[ Z \geq k \big].
\end{align} 

The following claim will allow us bound the right hand side of \eqref{ch3:eq1}.

\begin{claim}
\label{claim:bound_proba}
	There exists a constant $D<1$ (not depending on $j$) such that $\p \big[ Z \geq k \big] \leq D^{k-1}.$
\end{claim}

\textit{Proof of claim:}
Let $q$ be the probability that the simple random walk starting at the identity returns to the identity. Since the simple random walk on $\F_2$ is transient (see Remark \ref{rem:0_sum_finite}), we have that $q < 1$. Note that using the notation of Markov chains on $\mathbb N$ from Definition \ref{def:MC_viewed_on_N}, we have that $q$ is the probability that a Markov chain on the half-line $\mathbb N$ with transition probabilities $p(i,i-1)=1/4$ and $p(i,i+1)=3/4$ reaches $0$ having started at $1$. Let $D = 1/4 + 3q/4<1$. 

We prove the claim by induction on $k$. The base case $k=1$ is clear. We now assume that $\p \left[ Z \geq k \right] \leq D^{k-1}.$ 

Since $\p \left[Z \geq k+1 \vert Z < k \right]=0$, we have that $\p \left[ Z \geq k+1 \right]=\p \left[Z \geq k+1 \vert Z \geq k \right]\p \left[ Z \geq k \right] $.

Assume that $Z \geq k$ and let $i_k$ be the $k$-th index such that $X_{i_k}=j$. We get that
\begin{align*}
\p \big[Z \geq k+1 \vert Z \geq k \big]=&\p \big[ Z \geq k+1 \vert (Z \geq k) \cap (X_{i_k+1}=j-1) \big](\frac{1}{4}-a_j)\\
&+\p \big[ Z \geq k+1 \vert (Z \geq k) \cap (X_{i_k+1}=j+1) \big](\frac{3}{4}+a_j).
\end{align*}

Recall that $q<1$ is the probability that a Markov chain on $\mathbb N$ with transition probabilities $p(i,i-1)=1/4$ and $p(i,i+1)=3/4$ returns to $0$ having started at $1$. Observing that in $(X_n)_n$ the probability of going forward is at least 3/4 and using a coupling argument \cite{Lindvall}, one can show that $\p \left[ Z \geq k+1 \vert (Z \geq k) \cap (X_{i_k+1}=j+1) \right]\leq q$. Hence 
$$\p \big[Z \geq k+1 \vert Z \geq k \big] \leq 1/4-a_j +q(3/4+a_j)\leq D.$$

Consequently $\p \big[ Z \geq k+1 \big]=\p \big[Z \geq k+1 \vert Z \geq k \big]\p \big[ Z \geq k \big] \leq D^k$, which proves the claim.\hfill$\blacksquare$

\smallskip
Let $D$ be as in Claim \ref{claim:bound_proba}, we get that $\e\big[ Z\big]= \sum_{k=1}^{\infty} \p \big[ Z \geq k \big] \leq \sum_{k=1}^{\infty} D^{k-1} =(1-D)^{-1}$, which does not depend on $j$. \\

Let $M:=\# \{j \leq n-1 : j \in B\}.$ We have $M \leq 2\sum_{s=1}^{S}N_s^{5/6}$ where $S$ is the largest integer such that $N_S/4\leq n-1.$ Recall that $N_s=4^{s-1}N_1$ and hence $M \leq 2\sum_{s=1}^{S}N_s^{5/6}\leq 4 N_S^{5/6}\leq C'n^{5/6}$, for $C' = 4^{11/6}$.

Hence
$$\sum_{j=1}^{n-1}\sum_{h=j}^{n-1}\p[X_h = j] a_j \leq M (1-D)^{-1}a \leq C'n^{5/6}(1-D)^{-1}a.$$

We have bounded all the terms from \eqref{eqn:expected_value} from above and we can now bound $\e \left[ X_n\right]$ as follows

$$ \frac{n}{2} \leq \e \left[ X_n\right] \leq n-\frac{1}{2}\left(n-C\frac{1}{1-\rho}\right)+2C'n^{5/6}(1-D)^{-1}\eta,$$ 

where, for the lower bound, we again use \eqref{eqn:expected_value} bounding $\p\big[X_h \neq 0 \big] \leq 1$ for each $0\leq h\leq n-1$.
 
Hence, by the sandwich lemma, we get that $$\lim_{n\to \infty} \frac{\e\big[ X_n\big]}{n}=1/2,$$ as required.
\end{proof}

Let $i_0$ be such that for all $i \geq i_0$ we have $N_i^{2/3}+N_i^{3/4}<N_i^{5/6}$. The following tells us that within certain annuli, the progress made is large compared to the drift. 

\begin{lemma}
	\label{lem:in_annuli_big_linear_progress}
There exist a constant $C>0$ such that for all $i\geq i_0$ the following holds. If $g\in\F_2$ satisfies $d(1,g) \in [\frac{1}{2}N_i-N_i^{2/3} , \frac{1}{2}N_i+N_i^{2/3}] \subseteq B_{i}$, then for all $m \leq N_i^{3/4}$ we have that 
$$\mathbb P \left[d(1, z^{g}_m)-d(1,g) \geq (3/4 - \lambda)m \right] \geq 1-e^{-m/C}. $$
\end{lemma}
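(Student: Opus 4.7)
The plan is to exploit the fact that, starting from $g$ near the middle of $B_i$ and running for only $m \leq N_i^{3/4}$ steps, the walk cannot escape the band of lengths in $B_i$ where the transition probabilities are uniformly equal to $\lambda$. This reduces the statement to a clean Chernoff estimate on a sum of i.i.d.\ Bernoulli-type increments.

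First I would verify the \emph{containment step}: by the triangle inequality, for every $n\leq m$ we have $|d(1, z_n^g)-d(1,g)|\leq n\leq m\leq N_i^{3/4}$, so
\begin{equation*}
 d(1,z_n^g) \in \left[\tfrac{1}{2}N_i - N_i^{2/3} - N_i^{3/4},\, \tfrac{1}{2}N_i + N_i^{2/3} + N_i^{3/4}\right] \subseteq B_i,
\end{equation*}
where the final inclusion holds because $i\geq i_0$ ensures $N_i^{2/3}+N_i^{3/4} < N_i^{5/6}$. Hence, \emph{deterministically}, each intermediate vertex $z_n^g$ has length in $B_i$, so every $\lambda_{z_n^g}$ equals $\lambda$.

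Second, I would argue that this makes the length increments i.i.d. Define $Y_n := d(1, z_n^g) - d(1, z_{n-1}^g) \in \{-1,+1\}$. From the preceding step, conditional on $z_{n-1}^g$ we have $\mathbb{P}[Y_n = -1] = \lambda$ and $\mathbb{P}[Y_n = +1] = 1-\lambda$ regardless of the history, so the $Y_n$ are i.i.d.\ on $\{-1,+1\}$ with mean $1-2\lambda$. Then
\begin{equation*}
 d(1,z_m^g) - d(1,g) = \sum_{n=1}^m Y_n.
\end{equation*}

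Finally, I would apply a Chernoff/Hoeffding bound. Since $\lambda < 1/4$, the mean satisfies $1 - 2\lambda > 3/4 - \lambda$, leaving a positive gap $\delta := (1-2\lambda) - (3/4-\lambda) = 1/4 - \lambda > 0$. Applying Hoeffding's inequality to the bounded i.i.d.\ sum (or alternatively Lemma \ref{chernoff} applied to $(Y_n+1)/2$) gives
\begin{equation*}
 \mathbb{P}\!\left[\sum_{n=1}^m Y_n < (3/4-\lambda)m\right] = \mathbb{P}\!\left[\sum_{n=1}^m Y_n - (1-2\lambda)m < -\delta m\right] \leq e^{-\delta^2 m / 2}.
\end{equation*}
Setting $C := 2/\delta^2 = 2/(1/4-\lambda)^2$ yields the claimed inequality. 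I do not expect any real obstacle; the only thing to be careful about is the arithmetic threshold from $i_0$ used to absorb $m$ and $N_i^{2/3}$ into $N_i^{5/6}$, which guarantees that no step during the first $m$ iterations ever leaves the region where $\lambda_g = \lambda$.
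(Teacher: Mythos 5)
Your proposal is correct and takes essentially the same approach as the paper: first observe that the bounded displacement over $m \leq N_i^{3/4}$ steps, together with the choice of $i_0$, forces all intermediate positions to have length in $B_i$ (so all relevant $\lambda_g$ equal $\lambda$), then reduce to a Chernoff/Hoeffding concentration bound on i.i.d.\ increments. The paper phrases the increments as $\{0,1\}$-valued indicators of a backward step while you use the $\{-1,+1\}$-valued displacements, but these are related by an affine change of variables and yield the same constant $C = 2/(1/4-\lambda)^2$.
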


\begin{proof}

We first note that by the choice of $i_0$ and $g$, if $m \leq N_i^{3/4}$ then for all $0\leq k \leq m$ we have that $d(1,z_k^g) \in B_{i}$. Thus the random variables
\begin{align*}
Y_k  = \begin{cases}
    0 &\text{if $d(1, z_{k+1}^g)> d(1, z_k^g)$}\\
    1 & \text{if $d(1, z_{k+1}^g)< d(1, z_k^g)$}.
\end{cases}
\end{align*}
all satisfy $\e[Y_k] = \lambda$ and are independent. Defining $\bar{Y} = \frac{1}{m}\sum_{k=0}^{m-1}Y_k$ we get that $\e[\bar{Y}] = \lambda$ and $d(1, z_m^g) = d(1, z_0^g)+m(1 - 2Y)$. Using the Chernoff bound (Lemma \ref{chernoff}) for $\delta = (1/4 -\lambda)/2$ we get that
\begin{align*}
   \p[\bar{Y}\geq (1/4 + \lambda)/2]\leq e^{-2m\delta^{2}}.
\end{align*}
Setting $C = \frac{1}{2\delta^2}$ we get that 
\begin{align*}
\p[d(1, z_m^g)-d(1, g) \leq  m(3/4 - \lambda)]\leq e^{-m/C},
\end{align*}
and hence the statement follows.

\end{proof}
 We have shown in Lemma \ref{lem:well_defined_drift} that $(z^1_n)_n$ has a well-defined drift that is equal to $1/2$. Hence, it remains to show that it does not satisfy a Central Limit Theorem.
\begin{proposition}
	The Markov chain $(z^1_n)_n$ does not satisfy the Central Limit Theorem.
\end{proposition}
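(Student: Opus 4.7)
The plan is to argue by contradiction: assume the CLT holds with some $\sigma>0$, and use Lemma \ref{lem:in_annuli_big_linear_progress} to produce, along the subsequence indexed by $i$, a systematic extra displacement of order $N_i^{3/4}$ over a time window of length $m_i\approx N_i^{3/4}$, which is much larger than the $\sigma^2\sqrt{n}$ scale permitted by the CLT. The key size hierarchy I will exploit throughout is $\sqrt{N_i}\ll N_i^{2/3}\ll N_i^{3/4}\ll N_i^{5/6}$.

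First I would set $n_i=N_i$ and $m_i=\lfloor N_i^{3/4}\rfloor$, taking $i$ large enough that $i\ge i_0$ and $z_0\sigma^2\sqrt{n_i}\le N_i^{2/3}$ for a parameter $z_0>0$ to be chosen later. By the CLT at time $n_i$,
\[
\p\bigl[\,d(1, z_{n_i}^1)\in [\tfrac{n_i}{2} - z_0\sigma^2\sqrt{n_i},\, \tfrac{n_i}{2} + z_0\sigma^2\sqrt{n_i}]\,\bigr] \longrightarrow \Phi(z_0)-\Phi(-z_0).
\]
For such $i$ this event is contained in $\{d(1,z_{n_i}^1)\in [\tfrac{N_i}{2} - N_i^{2/3},\,\tfrac{N_i}{2} + N_i^{2/3}]\}$, which is exactly the hypothesis of Lemma \ref{lem:in_annuli_big_linear_progress}. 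Conditioning on the value $z_{n_i}^1=g$ and using the Markov property together with length-homogeneity, the shifted chain $(z_k^g)_k$ satisfies the hypothesis of that lemma, so with conditional probability at least $1-e^{-m_i/C}$,
\[
d(1,z_{n_i+m_i}^1)\ge d(1,z_{n_i}^1) + (3/4-\lambda)m_i \ge \tfrac{n_i+m_i}{2} + (1/4-\lambda)N_i^{3/4} - N_i^{2/3} - z_0\sigma^2\sqrt{n_i}.
\]

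Because $(1/4-\lambda)N_i^{3/4}$ dominates every other error term on the right-hand side, for every fixed $z>0$ and all sufficiently large $i$ the right-hand side exceeds $\tfrac{n_i+m_i}{2}+z\sigma^2\sqrt{n_i+m_i}$. Combining the two probability bounds,
\[
\liminf_{i\to\infty}\p\bigl[\,d(1,z_{n_i+m_i}^1) > \tfrac{n_i+m_i}{2}+z\sigma^2\sqrt{n_i+m_i}\,\bigr] \ge \Phi(z_0)-\Phi(-z_0),
\]
whereas the CLT forces this same probability, taken along the subsequence $n=n_i+m_i$, to converge to $1-\Phi(z)$. Choosing first $z_0$ with $\Phi(z_0)-\Phi(-z_0)>3/4$ and then $z$ with $1-\Phi(z)<1/4$ produces the contradiction. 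The one point I expect to be delicate is the rigorous application of Lemma \ref{lem:in_annuli_big_linear_progress} at the random starting point $z_{n_i}^1$: I would handle this by conditioning on its value, applying the lemma to each resulting shifted chain (which is again length-homogeneous with the same parameters), and integrating back out.
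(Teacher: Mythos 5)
Your proposal is correct and follows essentially the same route as the paper's proof: both begin by using the assumed CLT to concentrate $d(1,z^1_{N_i})$ in the window $[\tfrac{N_i}{2}-N_i^{2/3},\tfrac{N_i}{2}+N_i^{2/3}]$, then condition on the position at time $N_i$, apply Lemma~\ref{lem:in_annuli_big_linear_progress} together with the Markov property over the next $\approx N_i^{3/4}$ steps, and derive a contradiction because the resulting displacement of order $N_i^{3/4}$ overshoots the $\sqrt{n}$-window the CLT would allow at time $N_i+N_i^{3/4}$. The only difference is cosmetic (you phrase the contradiction via two tail probabilities at levels $z_0$ and $z$, while the paper fixes $\epsilon$ and compares two central probabilities against $1-\epsilon$), and the conditioning step you flag as potentially delicate is handled exactly as the paper does.
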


\begin{proof}
Let $\eta = 1/4  - \lambda >0$ and $C>0$ be as in Lemma \ref{lem:in_annuli_big_linear_progress}. We will assume that $(z_n^1)_n$ satisfies a Central Limit Theorem for some constant $\sigma>0$ and show that this leads to a contradiction. Let $\epsilon <1/2$ and let $z>0$ such that $\Phi(z) - \Phi(-z) > 1 - \epsilon/2$. Since $(z_n^1)_n$ satisfies a CLT, we have for large enough $n$, say $n\geq M_1\geq i_0$, that
\begin{align}\label{eqn:contradiction_CLT} 
    \mathbb P \left[ d(1,z^1_n) \in \left[\frac{n}{2}-n^{2/3}, \frac{n}{2}+n^{2/3}\right]\right] \geq \p\left[d(1,z^1_n) \in \left[\frac{n}{2}-z\sigma^2\sqrt{n}, \frac{n}{2}+z\sigma^2\sqrt{n}\right]\right]>1-\epsilon.
\end{align}
Here in the first step we used that for $n$ large enough, $n^{2/3}\geq z\sigma^2\sqrt{n}$, and in the second step we used \eqref{eqn:CLT}.
	
Let $M_2\geq M_1$ be such that for all $n \geq M_2$ we have $(1-e^{-n^{3/4}/C})(1-\epsilon)>\epsilon$ and $\eta n^{2/3}\geq 3n^{2/3}+1$, where we recall that $C$ is the constant from Lemma \ref{lem:in_annuli_big_linear_progress} and $\eta = \frac{1}{4}-\lambda$. Let $k$ be an integer such that $N_k\geq  M_2$ and denote $N_k$ by $n$.

\begin{claim}
    For $m=n+n^{3/4}$ we have that $$\p \left[ d(1,z^1_{m}) \in\left [\frac{m}{2}-m^{2/3}, \frac{m}{2}+m^{2/3}\right]\right]  <1-\epsilon.$$
\end{claim}

\textit{Proof of claim.} By the choice of $M_2$ we have that $\frac{m}{2}+m^{2/3}\leq \frac{n}{2}-n^{2/3}+(\frac{1}{2}+\eta)n^{3/4}$ and hence:
$$\p \left[ d(1,z^1_{m}) \in \left[\frac{m}{2}-m^{2/3},\frac{m}{2}+m^{2/3}\right]\right] \leq \p \left[ d(1, z_m^1 ) < \frac{n}{2}-n^{2/3}+\left(\frac{1}{2}+\eta\right)n^{3/4}\right].$$

Hence, it suffices to bound the probability on the right-hand side. Let $\mathcal B_{n,g}$ be the event``$z_n^1=g$" and let $C_n \subseteq \mathbb F_2$ be the subset of all elements $g\in \F_2$ such that $d(1,g) \in [\frac{n}{2}-n^{2/3}, \frac{n}{2}+n^{2/3}]$. Then:
\begin{align*}
	\p \left[ d(1, z_m^1 ) \geq \frac{n}{2}-n^{2/3}+\left(\frac{1}{2}+\eta\right)n^{3/4}\right] &\geq \sum_{g \in C_n}\p \left[ d(1, z_m^1 ) \geq \frac{n}{2}-n^{2/3}+\left(\frac{1}{2}+\eta\right)n^{3/4}\Big\vert \mathcal B_{n,g}\right]\p \Big[ \mathcal B_{n,g}\Big]   \\ 
& \geq \sum_{g \in C_n}\p \left[d(1,z^g_{n^{3/4}})-d(1,g)\geq (1/2+\eta)n^{3/4} \Big\vert \mathcal B_{n,g}\right]\p \Big[ \mathcal B_{n,g}\Big]   \\
		& \geq (1-e^{-n^{3/4}/C})\p[z_n^1\in C_n]\\
		 &>\epsilon.\\	
\end{align*}
To go from the first to the second line, we used the strong Markov property (\cite[Lemma 2.2]{GS21}). To go from the second to the third line we used Lemma \ref{lem:in_annuli_big_linear_progress}. To get from the third to the fourth line we used \eqref{eqn:contradiction_CLT} and the fact that $n\geq M_2$. 
\hfill$\blacksquare$

The claim is a contradiction to \eqref{eqn:contradiction_CLT} and hence a contradiction to the assumption that $(z_n^1)_n$ satisfies a CLT.

\end{proof}

\section{An example of the push-forward of a random walk not having a well-defined drift}\label{sec:xmastree}

In this section we construct a quasi-isometry $f$ from $\F_2$ to itself such that the push-forward of the simple random walk by $f$ does not have well-defined drift.

\subsection{Defining the quasi-isometry}

As in the previous sections, we identify $\F_2$ with its Cayley graph $\mathrm{Cay}(\F_2,  \mc S)$ for the standard generating set $\mc S$. Thus, we can view $\mathbb{F}_2$ as a tree $T$ rooted at $v_0 = 1$. For every vertex, we can label the edges to its children by $a$, $b$ and $c$ (or, in the case of $v_0$, by $a, b, c$ and $d$). We identify every vertex $v$ with the word $w$ read when travelling on the edge path from $v_0$ to $v$.

For a vertex $v\in \mathbb{F}_2$ we define $T_k(v)$ as the subtree of $\F_2$ rooted at $v$ consisting of $v$ and all its children at distance at most $k$. For two vertices $v, w\in \F_2$ which are either both equal to $v_0$ or distinct from $v_0$ we can define the bijection
$I_k(v, w)$, also called the identity map, as follows 
\begin{align*}
I_k(v, w) : T_k(v)&\to T_k(w)\\
vu&\mapsto wu,
\end{align*}
for all words $u$ of length $\leq k$.

\begin{figure}[h]\centering
\includegraphics[width=.4\linewidth]{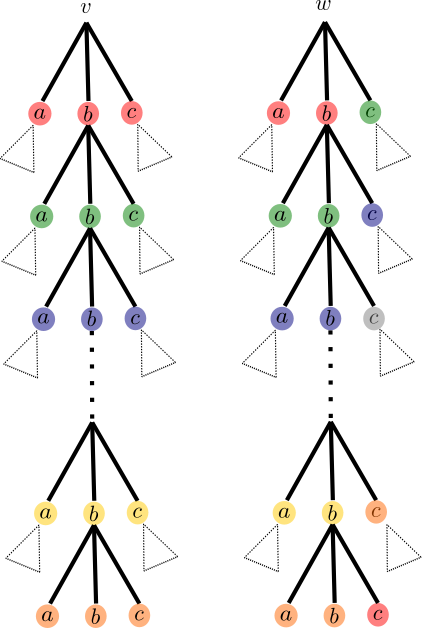}
\caption{Illustration of $\Psi_k(v, w)$. Vertices are labelled by $a, b$ or $c$ if the last edge leading up to them is labelled by $a, b$ or $c$ respectively.}
\label{picture:definition_of_qi}
\end{figure}

Furthermore for vertices $v, w\in\F_2$ which are distinct from $v_0$, we define $\Psi_k(v, w) : T_k(v) \to \F_2$ as follows.
\begin{align*}
    vb^i &\mapsto wb^i, \quad &\text{for all $0\leq i \leq k$.}\\
    vb^{i}a &\mapsto wb^{i} a, \quad &\text{for all $0\leq i \leq k-1$.}\\
    vb^{i}c &\mapsto wb^{i-1}c, \quad &\text{for all $1\leq i \leq k-1$.}\\
    vc &\mapsto wb^{k-1}c. &
\end{align*}
It remains to define $\Psi_c(v, w)$ for children of vertices of the form $u = vb^ix$ for $x\in \{a, c\}$ and $0\leq i \leq k-2$. For such a vertex $u$, we define $\Psi_k(v, w)\mid _ {T_{k - i - 1}(u)}$ as $I_{k-i-1} (u,\Psi_k(v, w)(u))$. In other words, for $u' = vb^ixp$ for $0\leq i \leq k-2$, $x\in \{ a, c\}$ and $p$ a word on $\{a, b, c\}$ with $\abs{p} +i+1\leq k$ we define $\Psi_k(v,w)(u') := \Psi_k(v, w)(vb^ix)p$. 

\smallskip

Let $C\geq 4$ be a constant. Note that, $\frac{3(C-2)}{32C}>\frac{C-1}{24C}$, which we will use later. We define a map $f$, called the \textit{Christmas tree quasi-isometry}, as follows and then show that it is a $(C, 0)$-quasi-isometry. 

\textbf{Construction of $f$:} We set $f(v_0) := v_0$ and define $f$ on $T_C(v_0)$ as $I_C(v_0, v_0)$. Next, we iteratively (starting with the closest vertices to $v_0$) define $f$ on $T_C(v)$ for vertices $v$ whose distance to $v_0$ is divisible by $C$.

Namely, let $\mc X\subset\mathbb N$ be the union $\cup_{n=0}^\infty [8^{2n}, 8^{2n+1})$. Define

\begin{align*}
    f|_{T_C(v)} := \begin{cases}
        I_C(v, f(v))& \text{if $d(v_0, v)/C\not \in \mc X$}, \\
        \Psi_C(v, f(v)) & \text{if $d(v_0, v)/C\in \mc X$ }.
    \end{cases}
\end{align*}

\begin{lemma}
The Christmas tree quasi-isometry $f$ is a $(C, 0)$-quasi-isometry.
\end{lemma}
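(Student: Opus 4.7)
I would verify both inequalities $\tfrac{1}{C} d(u,w) \leq d(f(u), f(w)) \leq C\, d(u,w)$ for all $u, w \in \F_2$, together with the bijectivity of $f$ (which gives coarse surjectivity). The strategy is to show that both $f$ and $f^{-1}$ stretch each edge of $\F_2$ by at most a factor $C$, and then concatenate along geodesics.

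For the upper bound, any two adjacent vertices $u, u' \in \F_2$ lie in a common block $T_C(v)$, where $v$ is their ancestor at the largest multiple-of-$C$ distance from $v_0$. On this block, $f$ is either $I_C(v, f(v))$ (isometric) or $X_C(v, f(v))$. For $X_C$, a case analysis of the edges of $T_C(v)$ shows that the maximum stretch is attained at the edge $(v, vc)$, which maps to $(f(v), f(v)b^{C-1}c)$ at distance exactly $C$; all other edges map to pairs at distance $1$ (backbone, $a$-subtree, or internal $c$-subtree edges) or $2$ (the edges $(vb^i, vb^i c)$ for $1 \leq i \leq C-1$, which map to $(f(v)b^i, f(v)b^{i-1}c)$). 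Concatenation along a geodesic then gives $d(f(u), f(w)) \leq C\, d(u,w)$.

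For the lower bound, I would first observe that $f$ is bijective: each $T_C(v)$ maps bijectively onto its image $\Omega_v := f(T_C(v))$, and the $\Omega_v$'s together with $T_C(v_0)$ partition $\F_2$ up to overlaps at the root vertices $f(v)$. I would then show that $f^{-1}$ also stretches each edge by at most $C$ via a parallel case analysis inside $\Omega_v$: the inverse of $X_C(v, f(v))$ undoes the cyclic shift of the $c$-children along the backbone, and the worst pullback is the edge $(f(v)b^{C-1}, f(v)b^{C-1}c)$, whose preimage is $(vb^{C-1}, vc)$ at distance $C$. Concatenating along a geodesic from $f(u)$ to $f(w)$ then yields $d(u,w) \leq C\, d(f(u), f(w))$.

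The main technical obstacle is verifying that every edge of $\F_2$ is contained in at least one single block (either $T_C(v_0)$ or some $\Omega_v$), so that the per-block analyses above suffice. The key point is that each root vertex $f(v^*)$ lies both in the ``parent'' block $\Omega_v$ (where $v^*$ is a leaf of $T_C(v)$) and in the ``child'' block $\Omega_{v^*}$; thus edges at the apparent boundary of a block terminate at a root shared with an adjacent block, and are therefore contained in that adjacent block. The Christmas-tree shape of $\Omega_v$ in $\F_2$, including the extended subtree of depth $C-1$ at $f(v)b^{C-1}c$, must be examined explicitly to confirm this tiling compatibility, which is the most delicate step.
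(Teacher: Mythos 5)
Your proof is correct, and it reaches the same conclusion as the paper's, but the organization is genuinely different. The paper decomposes the geodesic from $u_0$ to $w_0$ using the nested sequence of ``block-root'' ancestors $u_1, u_2, \ldots$ and $w_1, w_2, \ldots$ (those vertices at depths divisible by $C$), merging at a common $u_{i+1}=w_{j+1}$; it then observes that each segment $x_kx_{k+1}$ lies in a single $T_C(v)$, and that both the geodesic $[u_0,w_0]$ and the geodesic $[f(u_0),f(w_0)]$ pass exactly through the $x_k$'s (resp. $f(x_k)$'s) because these are cut vertices, so both distances decompose additively over segments and the two-sided per-block estimate $\frac{1}{C}d(f(x_k),f(x_{k+1}))\le d(x_k,x_{k+1})\le C\,d(f(x_k),f(x_{k+1}))$ sums to the global estimate. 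You instead argue edge-by-edge: for the upper bound, concatenation along a geodesic in the domain, using that every edge of $\F_2$ lies in some $T_C(v)$ and is stretched by at most $C$ under $I_C$ or $X_C$; for the lower bound, concatenation along a geodesic in the codomain, using that every edge lies in some image block $\Omega_v := f(T_C(v))$ and that $f^{-1}$ contracts it by at most $C$. Both routes work. The trade-off is exactly the one you identify: the edge-by-edge lower bound forces you to verify that the $\Omega_v$ tile $\F_2$ compatibly with the edge structure. That claim does hold, and the key facts are: each $\Omega_v$ is a convex subtree rooted at $f(v)$; $X_C(v,w)$ sends leaves of $T_C(v)$ to vertices with no children in $\Omega_v$; and, crucially, all three children of $f(v)$ are in $\Omega_v$ (namely $f(v)a$, $f(v)b$ directly, and $f(v)c = X_C(v,f(v))(vbc)$ from the $i=1$ case of the $c$-branch rule), so the boundaries of adjacent tiles meet only at the shared root/leaf vertices and every edge indeed has both endpoints in some single block. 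The paper's cut-vertex decomposition sidesteps having to make this tiling verification explicit, at the cost of a slightly more elaborate bookkeeping of ancestor sequences.
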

\begin{proof}

\begin{figure}\centering
\includegraphics[width= .2\linewidth]{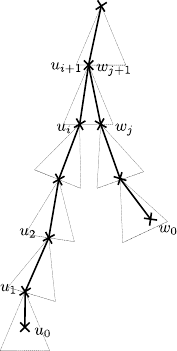}
\caption{Illustration of the ancestors of $u_0$ and $w_0$.}
\label{picture:f_is_qi}
\end{figure}

For vertices $v$ and $w$, the maps $I_C(v, w)$ and $\Psi_C(v, w)$ map $T_C(v)$ bijectively onto a subtree $S\subset \F_2$ rooted at $w$. Furthermore, $I_C(v, w)$ and $\Psi_C(v, w)$ are $C$-quasi-isometries onto their images and the leaves of $T_C(v)$ are sent bijectively to the leaves of $S$.  In particular, the map $f$ as a whole is a bijection. 

Let $u_0$ and $w_0$ be vertices of $T$.

Inductively define $u_{i+1}$ as follows, if $u_i = v_0$, then $u_{i+1} = v_0$, otherwise, $u_{i+1}$ is sent to the closest ancestor of $u_i$ whose distance to $v_0$ is divisible by $C$. Define the vertices $w_i$ analogously. The vertices $u_0, w_0$ and their ancestors are depicted in Figure \ref{picture:f_is_qi}. Let $(i, j)$ be the smallest pair of integers such that $u_{i+1} = w_{j+1}$. Define $x_0 = u_0$, $x_1 = u_1, \ldots, x_i = u_i, x_{i+1} = w_j, x_{i+2} = w_{j-1}, \ldots,  x_{i+j+1} = w_0$. For all $0\leq k \leq i+ j$, any path from $u_0$ to $w_0$ goes through $x_k$ and any path from $f(u_0)$ to $f(w_0)$ goes through $f(x_k)$. Hence

\begin{align*}
    d(u_0, w_0) = \sum_{k=0}^{i+j-1}d(x_k, x_{k+1})\quad \text{and}\quad d(f(u_0), f(w_0)) = \sum_{k=0}^{i+j-1}d(f(x_k), f(x_{k+1})).
\end{align*}
Furthermore, for all $0\leq k \leq i+ j$, the vertices $x_k$ and $x_{k+1}$ are in the subtree $T_C(v)$ for some vertex $v$ whose distance to $v_0$ is divisible by $C$. Hence, 
\begin{align}\label{proof:f_is_qi:eq1}
\frac{1}{C} d(f(x_k), f(x_{k+1}))\leq d(x_{k}, x_{k+1}) \leq Cd(f(x_k), f(x_{k+1})).
\end{align}
Summing \eqref{proof:f_is_qi:eq1} over all $k$ gives that 
\begin{align*}
\frac{1}{C} d(f(u_0), f(w_0))\leq d(u_0, w_0) \leq Cd(f(u_0), f(w_0)).
\end{align*}
Hence $f$ is indeed a $(C, 0)$-quasi-isometry.
\end{proof}

\subsection{Computing the drift of the push-forward}

Our goal of this section is to proof the following proposition, which states that the Christmas-tree quasi-isometry $f$ does not have well-defined drift.

\begin{proposition}\label{prop:f_has_no_drift}
	Let $(Z_n)_n$ be a simple random walk on $\mathbb F_2$ starting at the identity and let $(w^p_n)_n$ be the push-forward of $(Z_n)_n$ by the Christmas tree quasi-isometry $f$. The drift of $(w^p_n)_n$ does not exist. 
\end{proposition}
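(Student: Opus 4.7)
The key observation is that $f$ acts blockwise on the Cayley tree: for each vertex $v$ at level divisible by $C$, the restriction $f|_{T_C(v)}$ is either the identity map $I_C$ (when $d(v_0,v)/C \notin \mc X$) or the relabeling $X_C$ (when $d(v_0,v)/C \in \mc X$). Since tree labels never cancel upon concatenation, decomposing any vertex as $u_1 u_2 \cdots u_m u'$ with $|u_\ell|=C$ and $|u'|<C$, its $f$-image has length $\sum_{\ell=1}^{m} |\phi_\ell(u_\ell)| + O(C)$, where $\phi_\ell$ is the length-$C$ word map induced by $X_C$ if $\ell-1 \in \mc X$ and the identity otherwise. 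The plan is to compute the expected length contribution of an $X_C$-block, sum over blocks along the walk's trajectory, and exhibit two subsequences of times along which $\e[d(v_0, f(Z_n))]/n$ converges to distinct limits.

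\smallskip

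\noindent First I would classify $u \in \{a,b,c\}^C$ according to the four cases in the definition of $X_C$: $b^C$ (1 word, image length $C$), $b^i a p$ with $0 \le i \le C-1$ (total $(3^C-1)/2$ words, image length $C$), $b^i c p$ with $1 \le i \le C-1$ (total $(3^{C-1}-1)/2$ words, image length $C-1$), and $cp$ ($3^{C-1}$ words, image length $2C-1$). Averaging over uniformly random $u$ yields $\e[|X_C(u)|] = C + \delta$ with $\delta = C/3 - 1/2 + O(3^{-C}) > 0$ for $C \ge 4$.

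\smallskip

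\noindent Next, conditional on $|Z_n|=L$, the walk $Z_n$ is uniformly distributed over the $4 \cdot 3^{L-1}$ tree-vertices at level $L$, so its length-$C$ blocks $u_\ell$ for $\ell \ge 2$ are i.i.d.\ uniform in $\{a,b,c\}^C$. Combining with the block-expansion computation and setting $g(m) := |\mc X \cap [0,m)|$ gives
\begin{equation*}
\e[d(v_0, f(Z_n)) \mid |Z_n| = L] = L + \delta \cdot g(\lfloor L/C \rfloor) + O(C).
\end{equation*}
Since $|Z_n|$ concentrates exponentially around $n/2$ (Chernoff/Azuma applied to the $\pm 1$ increments of $d(v_0, Z_n)$) and $|f(Z_n)| \le Cn$ deterministically, marginalizing over $L$ yields $\e[d(v_0, f(Z_n))] = n/2 + \delta \cdot g(\lfloor n/(2C) \rfloor) + o(n)$. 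A geometric-series calculation gives $g(8^{2k}) = (8^{2k}-1)/9$ and $g(8^{2k+1}) = (8^{2k+2}-1)/9$, so along the subsequences $n_k^- := 2C \cdot 8^{2k}$ and $n_k^+ := 2C \cdot 8^{2k+1}$ one obtains
\begin{equation*}
\lim_{k\to\infty} \frac{\e[d(v_0, f(Z_{n_k^-}))]}{n_k^-} = \frac{1}{2} + \frac{\delta}{18C}, \qquad \lim_{k\to\infty} \frac{\e[d(v_0, f(Z_{n_k^+}))]}{n_k^+} = \frac{1}{2} + \frac{4\delta}{9C}.
\end{equation*}
Since $\delta > 0$, these limits differ by $7\delta/(18C) > 0$, so $\e[d(v_0, f(Z_n))]/n$ does not converge and the drift of $(w_n^p)_n$ does not exist.

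\smallskip

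\noindent The subtlest step is replacing $\e[g(\lfloor |Z_n|/C \rfloor)]$ by $g(\lfloor n/(2C) \rfloor)$: the staircase $g$ has jumps at the boundaries of $\mc X$, and the Chernoff concentration window of $|Z_n|/C$ of width $O(\eta n/C)$ around $n/(2C)$ may straddle such a boundary, producing an error in $g$ of size $O(\eta n / C)$. Taking the concentration error $\eta_k \to 0$ slowly along the subsequences (e.g.\ $\eta_k = n_k^{-1/3}$) keeps this straddling contribution $O(\eta_k)$ negligible compared to the leading-order values $\delta/(18C)$ and $4\delta/(9C)$, while $\eta_k^2 n_k \to \infty$ ensures that the Chernoff bound still controls the bad event at $o(n)$.
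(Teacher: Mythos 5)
Your proof is correct and follows essentially the same strategy as the paper: decompose a uniformly random vertex at level $L$ into length-$C$ blocks, compute the expected length gain per $X_C$-block, and exhibit two subsequences where the fraction of $X_C$-depths below $|Z_n|/C$ converges to different values. Your $\delta = C/3-1/2+O(3^{-C})$ is the exact value of the paper's displacement $D_X$ (the paper only derives the bounds $\frac{C-2}{3}\leq D_X\leq\frac{C-1}{3}$), and your block-counting formula $\e[d(v_0,f(Z_n))\mid |Z_n|=L]=L+\delta\,g(\lfloor L/C\rfloor)+O(C^2)$ matches the paper's Lemma~\ref{lemma:can_compute_a}. The main difference is a computational refinement rather than a new idea: you choose the subsequences $n_k^{\pm}=2C\cdot 8^{2k},\ 2C\cdot 8^{2k+1}$ so that $n/(2C)$ — the typical value of $\lfloor|Z_n|/C\rfloor$ — lands exactly on the $\mc X$-boundaries $8^{2k},8^{2k+1}$, which gives clean two-sided limits $1/2+\delta/(18C)$ and $1/2+4\delta/(9C)$. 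The paper instead takes $n=8^{2t}C$ (and, for the $\limsup$, evidently $n=8^{2t+1}C$ — the paper's text repeats $8^{2t}C$, which looks like a typo, since the stated bound $A(i)\geq i+\tfrac{n(C-2)}{8C}-C(C+1)$ only comes out if $n=8^{2t+1}C$) and bounds $A(i)$ crudely over the relevant ranges, getting the one-sided inequalities $\liminf\leq 1/2+\frac{C-1}{24C}$ and $\limsup\geq 1/2+\frac{3(C-2)}{32C}$, whose separation then needs the assumption $C\geq 4$. Your choice makes the gap $7\delta/(18C)>0$ for all $C\geq 2$ and avoids the explicit inequality check. One small simplification you could make: the ``boundary-straddling'' discussion at the end is unnecessary, since $m\mapsto g(m)$ is $1$-Lipschitz, so $L\mapsto g(\lfloor L/C\rfloor)$ is Lipschitz with constant roughly $1/C$; then $\e\bigl[\bigl|g(\lfloor|Z_n|/C\rfloor)-g(\lfloor n/(2C)\rfloor)\bigr|\bigr]\leq \e\bigl[\,\bigl||Z_n|-n/2\bigr|\,\bigr]/C+1=O(\sqrt n)$ follows directly from the variance bound for the biased walk $|Z_n|$, with no Chernoff needed and no choice of $\eta_k$. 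Also, applying Chernoff directly to the $\pm 1$ increments of $d(v_0,Z_n)$ needs a slight caveat since the increment at the origin is deterministic, but this is standard (e.g.\ by coupling or Azuma).
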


\begin{remark}
    By Lemma \ref{lem:push_forward_is_tame} the push-forward $(w^p_n)$ of $(Z_n)_n$ is a tame Markov chain.
\end{remark}

Before we start with the proof of Proposition \ref{prop:f_has_no_drift}, we prove some technical Lemmas.

Let $v, w \neq v_0$ be vertices of $\F_2$. We denote by $L_C(v)$ the leaves of $T_C(v)$ or in other words all vertices in the subtree rooted at $v$ at distance $C$ of $v$. We define the displacement $D_\Psi$ by $\Psi_C(v, w)$ as

$$
D_\Psi = -C +  \frac{1}{\abs{L_C(v)}}\sum_{u\in L_C(v)} d(\Psi_C(v, w)(u), w).
$$

Note that $\abs{L_C(v)} = 3^C$ and that $D_\Psi$ does not depend on $v$ or $w$. Furthermore, we can bound $D_\Psi$ using the following observations.

In the subtree rooted at $va$ we have that every vertex $vax$ gets sent to $wax$ and hence for leaves of the form $vax$ we have that 
$$d(w, \Psi_C(v, w)(vax)) = C.$$

In the subtree rooted at $vc$ we have that every vertex $vcx$ gets sent to $wb^{C-1}x$ and hence for leaves of the form $vcx$ we have that 
$$d(w, \Psi_C(v, w)(vcx)) = 2C-1.$$

In the subtree rooted at $vb$ we have that 
$$C-1\leq d(w, \Psi_C(v, w)(vbx)) \leq C,$$

for leaves $vbx$. Each of the subtrees rooted at $va$, $vb$ and $vc$ contains a third of the leaves and hence
\begin{align*}
\frac{C-2}{3}\leq D_\Psi\leq \frac{C-1}{3}.
\end{align*}

We define the displacement $D_I$ of the identity map $I_C(v, w)$ similarly:
$$D_I = -C +  \frac{1}{\abs{L_C(v)}}\sum_{u\in L_C(v)} d(I_C(v, w)(u), w),$$
and satisfies $D_I = 0$.
Define
$$A(i):= \mathbb E[d(v_0,f(Z_n)) \mid d(v_0,Z_n)=i ].$$

By symmetry we have for all elements $g, h\in F_2$ with $\ell(g) = \ell(h)$, that $\p[Z_n = g] = \p[Z_n = h]$. Hence 
\begin{align*}
    A(i) = \frac{1}{\vert S(i) \vert}\sum_{ g \in S(i)}d(v_0,f(g)),
\end{align*}
 where $S(i) =\{ g \in \mathbb F_2 \vert d(v_0,g)=i\}$.

\begin{lemma}\label{lemma:can_compute_a}
Let $q$ be an integer. We have that
 \begin{align}\label{lemma:can_compute_q1:eq}
 A(qC) = qC + kD_\Psi,
 \end{align}
 where $k$ is the cardinality of $\mc X\cap \{0, 1, \ldots, q-1\}$. In other words, $k$ is the number of depths less than $q$ where $f$ restricted to a subtree is defined using $\Psi_C(\cdot, \cdot)$.
\end{lemma}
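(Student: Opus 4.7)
The plan is to decompose $d(1,f(g))$ for $g\in S(qC)$ as a telescoping sum of $q$ contributions, one per depth-$C$ block the geodesic from $1$ to $g$ passes through, and then take the uniform average over $S(qC)$.

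First I would fix notation: for $g\in S(qC)$, let $v_0=1,v_1,\dots,v_q=g$ be the ancestors of $g$ at depths $0,C,2C,\dots,qC$, so that $v_m$ is a leaf of $T_C(v_{m-1})$ for each $m\geq 1$. The key geometric fact I would establish is that $f(v_{m-1})$ is always an ancestor of $f(v_m)$ in the rooted tree $\F_2$ (equivalently, $f(v_{m-1})$ is a prefix of $f(v_m)$ as a word). This can be read off directly from the piecewise construction: every formula in the definitions of $I_C(v,w)$ and $X_C(v,w)$ sends a vertex of $T_C(v)$ to a word of the form $w\cdot p$, and the iterated identity maps on the $a$- and $c$-sub-subtrees inherit this property. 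Iterating, $f(v_0),f(v_1),\dots,f(v_q)$ all lie on the geodesic from $1$ to $f(g)$ in $\F_2$, yielding
$$d(1,f(g))=\sum_{m=1}^{q} d(f(v_{m-1}), f(v_m)).$$

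Next I would take the expectation over $g$ drawn uniformly from $S(qC)$. By the tree structure, this is equivalent to drawing $v_1$ uniformly from $L_C(v_0)$ and then, conditionally independently for each $m\geq 2$, drawing $v_m$ uniformly from $L_C(v_{m-1})$. By symmetry, the conditional expectation $\e[d(f(v_{m-1}),f(v_m))\mid v_{m-1}]$ depends only on which map defines the block rooted at $v_{m-1}$, which in turn is determined by whether $(m-1)\in\mathcal X$. By the very definitions of $D_I=0$ and $D_X$, this conditional expectation equals $C$ when $(m-1)\notin\mathcal X$ and $C+D_X$ when $(m-1)\in\mathcal X$. The block at $v_0$ is defined explicitly to be $I_C(v_0,v_0)$, consistent with $0\notin\mathcal X$, and contributes exactly $C$.

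Summing across $m=1,\dots,q$ and applying linearity of expectation then gives
$$A(qC)=qC+D_X\cdot\bigl|\mathcal X\cap\{0,1,\dots,q-1\}\bigr|=qC+kD_X,$$
as required. The main technical obstacle is the geometric fact that $f(v_{m-1})$ is always a prefix of $f(v_m)$: this requires unpacking the somewhat intricate case analysis for $X_C$ (in particular checking the $vb^i c$ branch, where the images are shifted but still remain descendants of $w$ in $\F_2$). Once that is pinned down, the remaining computation is a clean application of symmetry and independence.
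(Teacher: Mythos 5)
Your proof is correct and relies on the same key observation as the paper's, namely that $f(v)$ is an ancestor of $f(g)$ whenever $g$ lies in the block $T_C(v)$, so distances from the identity decompose block-by-block. The paper packages the resulting telescoping computation as an induction on $q$ (peeling off the last block via $A(qC) = A((q-1)C) + \frac{1}{|S(qC)|}\sum_{v\in S((q-1)C)}\sum_{g\in L_C(v)} d(f(v),f(g))$), whereas you unroll the whole telescoping sum directly and phrase the averaging probabilistically via conditional expectations; this is a stylistic rather than substantive difference.
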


\begin{proof}
We prove the lemma by induction on $q$.

Base case, $q= 1$:  The map $f$ restricted to $T_C(v_0)$ is the identity. Thus $A(C) = C$.

Induction step: Assume the statement holds for $q-1\geq 1$. We have that

\begin{align}\label{proof:can_compute_a:eq1}
A(qC) = \frac{1}{\abs{S(qC)}} \sum_{g\in S(qC)} d(v_0, f(g)) =\frac{1}{\abs{S(qC)}} \sum_{v\in S((q-1)C)} \sum_{g\in L_C(v)}d(v_0, f(g)).
\end{align}
Furthermore, for $v\in S((q-1)C)$ and $g\in L_C(v)$, we have that $d(v_0, f(g)) = d(v_0, f(v)) + d(f(v), f(g))$. Also, $\abs{S(qC)} = \abs{S((q-1)C)}3^C$. Combining these observations with \eqref{proof:can_compute_a:eq1}, we get that 

\begin{align*}
A(qC) & = \frac{1}{\abs{S(qC)}} \sum_{v\in S((q-1)C)} \left(  \abs{L_C(v)}d(v_0, f(v)) + \sum_{g\in L_C(v)}d(f(v), f(g))\right )\\
& = A((q-1)C) + \frac{1}{\abs{S(qC)}} \sum_{v\in S((q-1)C)}  \sum_{g\in L_C(v)}d(f(v), f(g)). 
\end{align*}

If $q-1\in \mc X$, then by definition

\begin{align*}
 \sum_{g\in L_C(v)}d(f(v), f(g)) = 3^C (D_\Psi + C).
\end{align*}

and hence

\begin{align*}
A(qC) = A((q-1)C) + D_\Psi +C.
\end{align*}
If $q-1\not\in \mc X$, then 
\begin{align*}
 \sum_{g\in L_C(v)}d(f(v), f(g)) = 3^C (D_I + C) = 3^CC.
\end{align*}
and 
\begin{align*}
A(qC) = A((q-1)C) +C.
\end{align*}
Thus if $q-1$ satisfies the lemma, so does $q$.
\end{proof}

Now we are ready to show that the push forward of $f$ does not have well-defined drift. 

\begin{proof}[Proof of Proposition \ref{prop:f_has_no_drift}]
We have
 \begin{align*}\mathbb E[d(1,w_n)] &=\mathbb E[d(1,f(Z_n))]=\sum_{i=1}^{n}\mathbb P[d(1,Z_n)=i]\mathbb  E[d(1,f(Z_n)) \mid d(1,Z_n)=i ]\\
 &= \sum_{i=1}^n\mathbb P[d(1,Z_n)=i]A(i).\\
 \end{align*}

If $t\geq 0$ is an integer and $k \geq 4\cdot8^{2t}$, then $\abs{\mc X\cap\{0, \ldots, k-1\}}\geq 3\cdot 8^{2t}$. Hence by Lemma \ref{lemma:can_compute_a},

$$ A(kC) \geq kC+3\cdot 8^{2t}\frac{C-2}{3}.$$

If in addition $d\leq C$,

$$ A(kC+d) \geq kC+d+3\cdot 8^{2t}\frac{C-2}{3} - C(C+1),$$

since $f$ is a $(C ,0)$-quasi-isometry.

If $t\geq 1$ is an integer, $k \leq 8^{2t}$ and $d \leq C$ we have that $\abs{\mc X\cap \{0 , \ldots, k-1\}}\leq 8^{2t-1}$ and hence 

$$ A(kC+d) \leq kC+d+8^{2t-1}\frac{C-1}{3}+C^2.$$ 

Thus for integers $t\geq 1$, $n = 8^{2t}C$ and $i\leq n$ we have that $A(i) \leq i+\frac{C-1}{3}8^{2t-1}+C^2=i+\frac{C-1}{24C}n+C^2$. 
Hence 
$$ \frac{1}{n}\sum_{i=1}^{n}\mathbb P[d(1,Z_n)=i] A(i)\leq \frac{C-1}{24C}+\frac{C^2}{n}+\frac{1}{n}\sum_{i=1}^n i \p[d(1,Z_n)=i].$$ 

Observing that $\sum_{i=1}^ni\p[d(1, Z_n) = i]= \e[d(1, Z_n)]$ and $\lim_{n \to +\infty} \frac{\mathbb E [ d(1, Z_n)] }{n} = \frac{1}{2}$ is the drift of the simple random walk $(Z_n)_n$ on $\mathbb F_2$, we get that 

$$\liminf_{n \to +\infty}\frac{\mathbb E [ d(1, f(Z_n))] }{n} \leq \frac{1}{2}+\frac{C-1}{24C}.$$

On the other hand, for any integer $t$ and $n = 8^{2t}C$ large enough such that, $\mathbb P [ d(1,Z_n)\geq n/2 ]\geq 3/4$, we have 

\begin{align*}
\frac{1}{n}\sum_{i=1}^{n}\mathbb P[d(1,Z_n)=i] A(i)
= \frac{1}{n}\sum_{i=1}^{n/2 -1}\mathbb P[d(1,Z_n)=i] A(i) + \frac{1}{n}\sum_{i=n/2}^{n}\mathbb P[d(1,Z_n)=i] A(i).
\end{align*}
For all $i$ we have that $A(i) \geq i - C(C+1)$ and for $i\geq n/2$ we have that 
$A(i)\geq i + 8^{2t-1}(C-2) - C(C+1) = i + \frac{n(C-2)}{8C} - C(C+1)$. Therefore, 

\begin{align*}
\frac{1}{n}\sum_{i=1}^{n}\mathbb P[d(1,Z_n)=i] A(i)
\geq \frac{1}{n}\sum_{i=1}^{n}\mathbb P[d(1,Z_n)=i] i + \frac{3(C-2)}{32C}-\frac{C(C+1)}{n}.
\end{align*}

Hence, 

$$\limsup_{n\to +\infty} \frac{\mathbb E [ d(1, f(Z_n))] }{n}\geq \frac{1}{2}+\frac{3(C-2)}{32C}.$$

We chose $C$ such that $\frac{3(C-2)}{32}>\frac{C-1}{24}$. As a consequence, 

$$\limsup_{n\to +\infty} \frac{\mathbb E [ d(1, f(Z_n))] }{n}>\liminf_{n \to +\infty}\frac{\mathbb E [ d(1, f(Z_n))] }{n},$$

and hence $(w_n)_n$ does not have well-defined drift.

\end{proof}

\bibliography{main}
\bibliographystyle{alpha}
\end{document}